\newtheorem{theorem}{Theorem}[section]
\newtheorem{prop}[theorem]{Proposition}
\newtheorem{coro}[theorem]{Corollary}
\newtheorem{lemma}[theorem]{Lemma}
\newtheorem{defi}[theorem]{Definition}
\title{Rolling against a sphere: The non transitive case}
\author[Y. Chitour, M. Godoy M., P. Kokkonen, I. Markina]{Yacine Chitour\\Mauricio Godoy Molina\\Petri Kokkonen\\Irina Markina}
\address{L2S, Universit\'e Paris-Sud XI, CNRS and Sup\'elec, Gif-sur-Yvette, 91192, France. }
\email{yacine.chitour@lss.supelec.fr}
\address{Department of Mathematics, University of Bergen, Norway.}
\email{mauricio.godoy@math.uib.no}
\address{Varian Medical Systems, Helsinki, Finland.}
\email{pvkokkon@gmail.com}
\address{Department of Mathematics, University of Bergen, Norway.}
\email{irina.markina@math.uib.no}
\thanks{The second author is partially supported by the grant of the Norwegian Research Council 213440/BG. The fourth author is partially supported by the grant of the Norwegian Research Council 204726/V30.}
\subjclass[2000]{53C05, 53C29, 70G45}
\keywords{Rolling system, Sasakian manifold, 3-Sasakian manifold, nonholonomic mechanics, holonomy of connections}
\begin{document}

\maketitle

\begin{abstract}
We study the control system of a Riemannian manifold $M$ of dimension $n$ rolling on the sphere $S^n$. The controllability of this system is described in terms of the holonomy of a vector bundle connection which, we prove, is isomorphic to the Riemannian holonomy group of the cone $C(M)$ of $M$. 

Using Berger's list, we reduce the possible holonomies to a few families. In particular, we focus on the cases where the holonomy is the unitary and the symplectic group. In the first case, using the rolling formalism, we construct explicitly a Sasakian structure on $M$; and in the second case, we construct a 3-Sasakian structure on $M$.
\end{abstract}

\section{Introduction}

It is well known, that modern control theory began mostly as a linear theory, having its roots in electrical engineering by making use of linear algebra, complex and functional analysis as its main tools. Nonlinear control theory relies to a large extend on differential geometry. In the present paper we deal with a generalization of the classical nonlinear mechanical problem: rolling of rigid bodies. More precisely, we consider  an $n$-dimensional Riemannian manifold $M$ rolling on a space form of positive curvature without slipping and twisting. Our main interest is to relate the controllability of this system with the geometric properties of the rolling manifold $M$.  The kinematic constraints of no-slipping and no-spinning have nonholonomic nature. Core concepts for studying the control and geometry of nonholonomic systems are the notions of fiber bundle and associated connections. The bundle point of view not only gives us a way of organizing variables in a physically meaningful way, but reveals the basic idea of the relation between geometric properties of the two rolling manifolds. A bundle connection relates base and fiber variables in the system, and in this sense one can take a gauge theoretical point of view of nonholonomic control systems.

Thus the rolling system of the manifold $M$ over the space form of positive curvature, that is, the $n$-dimensional sphere of some radius, constrained by the no-slipping and no-twisting conditions, leads to the principal $SO(n+1)$-bundle $Q\to M$, where $Q$ is the configuration space of the rolling system. The rolling distribution, encoding the no-slipping and no-twisting conditions, is an Ehresmann connection. We associate a vector bundle to the $SO(n+1)$-bundle $Q\to M$, by making use of representation theory. This allows us to use a metric connection related to the vector bundle, that is naturally called the rolling connection. In the next step we show that the holonomy group of this rolling connection is isomorphic to the Riemannian holonomy of the Riemannian cone $C(M)$ over the original manifold $M$. The details are presented in Sections~2 and 3. There we also reveal the relation between the properties of the holonomy group of the Riemannian cone $C(M)$ and the geometry of the manifold $M$. We would like to stress that the holonomy group of the rolling connection completely determines the controllability of the rolling system. Section 4 shows that if the holonomy group of the rolling connection for an odd dimensional manifold $M$ is a subgroup of the unitary group, then the rolling system is not controllable and moreover the manifold $M$ itself inherits the Sasakian structure of odd dimensional sphere over which the manifold $M$ is rolled. We restore all the Sasakian structure on $M$, starting from the holonomy group of the rolling connection. The Heisenberg group gives an example of the manifold $M$ whose rolling holonomy group is isomorphic to the unitary group. The last Section 5 consider the case of the symplectic holonomy groups and their relation to the 3-Sasakian structure of the rolling manifold $M$. 

\section{Preliminaries}

We assume that all the Riemannian manifolds we will be working with are connected, simply connected, oriented and complete.

\subsection{The rolling system}

The higher dimensional (intrinsic) rolling system has been introduced in~\cite{CK1,GGML}. Intuitively, this abstract mechanical system consists of two $n$ dimensional Riemannian manifolds $(M,g)$ and $(\widehat M,\widehat g)$ rolling one on the other. More precisely, we can construct the {\em state space}
\[
Q=Q(M,\widehat M)=\{A\colon T_xM\to T_{\widehat x}\widehat M\,|\,A\in{\rm SO}(T_xM,T_{\widehat x}\widehat M),x\in M,\widehat x\in\widehat M\}.
\]
An absolutely continuous curve $q\colon[0,\tau]\to Q$ is called a {\em rolling curve} if $q(t)=(x(t),\widehat x(t);A(t))$ satisfies the conditions
\begin{itemize}
\item[(NS)] $\dot{\widehat x}(t)=A(t)\dot x(t)$, for almost all $t\in[0,\tau]$.

\item[(NT)] $q(t) \frac{D}{dt} Z(t) = \frac{D}{dt} q(t) Z(t)$ for any tangent vector field $Z(t)$ along $x(t)$, for almost all $t\in[0,\tau]$, where $\frac{D}{dt}$ denotes the covariant derivative on $M$ or $\widehat M$.
\end{itemize}
The restriction (NS) is often called the {\em no-slip} condition, and (NT) the {\em no-twist} condition.

The rolling curves determine a rank $n$ subbundle ${\mathcal D}_{\rm R}\hookrightarrow TQ$, called the {\em rolling distribution} defined as
\[
{\mathcal D}_{\rm R}|_q=\{\dot q(0)\,|\,q\mbox{ is a rolling curve s.t. }q(0)=q\},\quad q\in Q.
\]
Alternatively, we can define this distribution by means of the {\em rolling lift} ${\mathscr L}_{\rm R}(X)|_q\in T_qQ$ of a tangent vector $X\in T_xM$, given by
\[
{\mathscr L}_{\rm R}(X)|_q=\left.\frac{d}{dt}\right|_{t=0}\Big(P^t_0(\widehat\gamma)\circ A\circ P^0_t(\gamma)\Big),
\]
where $q=(x,\widehat x;A)$, and $(\gamma,\widehat\gamma)$ is any pair of curves satisfying 
\[
(\gamma(0),\widehat\gamma(0))=(x,\widehat x),\quad(\dot\gamma(0),\dot{\widehat\gamma}(0))=(X,AX).
\]
As usual $P_a^b(\gamma)$ (resp. $P_a^b(\widehat\gamma)$) denotes the parallel transport in $M$ along $\gamma$ from $\gamma(a)$ to $\gamma(b)$ (resp. $\widehat M$ along $\widehat\gamma$ from $\widehat\gamma(a)$ to $\widehat\gamma(b)$). In this terms we have that
\[
{\mathcal D}_{\rm R}|_q={\mathscr L}_{\rm R}(T_xM)|_q,
\]
and a curve $q\colon[0,\tau]\to Q$ is a rolling curve if and only if $\dot q(t)\in{\mathcal D}_{\rm R}|_{q(t)}$ for almost all $t\in[0,\tau]$. For more details regarding the rolling lift, see~\cite{K1}, and for a coordinate description of ${\mathcal D}_{\rm R}$, see~\cite{GGML}.

The controllability question for the rolling system asks whether for any two given points $q_0,q_1\in Q$ there exists a rolling curve $q\colon[0,\tau]\to Q$ such that $q(0)=q_0$ and $q(\tau)=q_1$. This question is in general very hard to answer, and thus we focus on the special case when the manifold $\widehat M$ has some simpler geometry.

\subsection{Rolling against a space form}\label{ssec:spform}

In~\cite{CK2,CGK2}, the authors study the rolling system when $(\widehat M,\widehat g)$ is a space form, that is, a complete and simply connected Riemannian manifold of constant sectional curvature $c\in {\mathbb R}$. In this case, the natural projection $\pi_{Q,M}\colon Q\to M$ is a principal bundle of a special form, which we proceed to explain.

Recall that on a $G$-principal bundle $\pi\colon P\to M$ a $G$-invariant subbundle $D\subset TP$ is a {\em horizontal distribution} if it satisfies $T_pP=D_p\oplus \ker d_p\pi$ for any $p\in P$. 

\begin{theorem}\label{th:ppal}
The projection $\pi_{Q,M}\colon Q\to M$ admits a $G$-principal bundle structure with horizontal distribution ${\mathcal D}_{\rm R}$, for some Lie group $G$, if and only if $\widehat M$ is a space form (under some genericity assumptions, see~\cite[Theorem 4.10]{CGK2}). In this situation, the Lie group $G$ is given by
\[
G=\begin{cases}{\rm SO}(n+1),&c>0,\\{\rm SE}(n),&c=0,\\{\rm SO}_0(n,1),&c<0,\end{cases}
\]
where $c$ is the curvature of $\widehat M$.
\end{theorem}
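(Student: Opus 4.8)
The plan is to prove this by analyzing the structure group of the bundle $\pi_{Q,M}\colon Q \to M$ directly and using the no-twist condition to identify when the rolling distribution is genuinely a principal connection. First I would fix a base point $x_0 \in M$ and a configuration $q_0 = (x_0, \widehat x_0; A_0) \in Q$, and study the \emph{holonomy} of the rolling distribution $\mathcal{D}_{\rm R}$ over loops based at $x_0$. The key observation is that parallel transport along $\mathcal{D}_{\rm R}$ (i.e.\ following rolling curves projecting to loops in $M$) produces maps of the fiber $\pi_{Q,M}^{-1}(x_0)$ to itself, and the collection of such maps must form a group acting simply transitively (for a principal structure) on the fiber. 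Since the fiber over $x_0$ is the set of orientation-preserving isometries $T_{x_0}M \to T_{\widehat x}\widehat M$ with $\widehat x$ ranging over $\widehat M$, its identification with a Lie group $G$ is exactly what the theorem must pin down.

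The heart of the argument is that the geometry of $\widehat M$ enters only through the parallel transport $P_a^b(\widehat\gamma)$ appearing in the rolling lift, and for a space form this parallel transport is governed entirely by the constant curvature $c$. Concretely, I would compute the curvature (bracket structure) of the rolling distribution: taking the rolling lifts $\mathscr{L}_{\rm R}(X)$, $\mathscr{L}_{\rm R}(Y)$ of two vector fields on $M$ and computing their Lie bracket, one obtains a vertical correction term built from the Riemann curvature tensors of $M$ and $\widehat M$. For the distribution to define a principal bundle with a genuine structure group acting by a finite-dimensional Lie algebra, the fiber-directions generated by these brackets must close into a Lie algebra independent of the base point; this is precisely where constant curvature of $\widehat M$ is needed, since $\widehat R$ then takes the universal form $\widehat R(u,v)w = c\,(\langle v,w\rangle u - \langle u,w\rangle v)$. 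Plugging this in, the bracket relations mirror exactly the structure constants of $\mathfrak{so}(n+1)$ when $c>0$, of $\mathfrak{se}(n)$ when $c=0$, and of $\mathfrak{so}(n,1)$ when $c<0$, which I would verify by matching the vertical components against the standard matrix realizations of these algebras acting on $\real^{n+1}$, $\real^n \rtimes \real^n$, and $\real^{n,1}$ respectively.

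For the converse direction --- that a principal structure forces $\widehat M$ to be a space form --- I would argue that if $\mathcal{D}_{\rm R}$ is $G$-invariant and horizontal for a Lie group structure, then the curvature of the associated principal connection must be a $G$-equivariant two-form with values in $\mathfrak{g}$, and this homogeneity forces the sectional curvature of $\widehat M$ to be constant. The cleanest route is to note that $G$-invariance of $\mathcal{D}_{\rm R}$ means the right $G$-action preserves the bracket-generated vertical distribution, and transitivity of the induced action on a suitable fiber forces $\widehat M$ to be isometrically homogeneous with an isometry group of the maximal dimension $\binom{n+1}{2}$, hence of constant curvature by the classical characterization of two-point homogeneous (maximally symmetric) spaces.

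The main obstacle I anticipate is the careful bookkeeping in the bracket computation: the rolling lift involves parallel transport on \emph{both} factors simultaneously, so the vertical part of $[\mathscr{L}_{\rm R}(X), \mathscr{L}_{\rm R}(Y)]$ mixes $R$ and $\widehat R$, and one must disentangle the contribution of $M$ (which is generically nontrivial and base-point dependent) from that of $\widehat M$ to see that only the latter can be absorbed into a fixed structure group. Establishing that the $M$-curvature terms do \emph{not} obstruct the principal structure --- i.e.\ that they are already accounted for by the horizontal lift and do not enlarge the structure group beyond $G$ --- is the delicate point, and I expect this is exactly where the genericity assumptions cited from \cite[Theorem 4.10]{CGK2} are invoked. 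Rather than reprove that technical result, I would cite it for the precise hypotheses and concentrate the new work on the explicit identification of $G$ with the three families above via the constant-curvature form of $\widehat R$.
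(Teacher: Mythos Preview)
The paper does not contain a proof of this theorem. It is stated in Section~2.2 as a preliminary result, with both the argument and the precise genericity hypotheses deferred entirely to \cite[Theorem~4.10]{CGK2}. There is therefore no proof in this paper to compare your proposal against; your sketch should be measured against that reference rather than against anything here.

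That said, a brief assessment of your outline on its own merits. The forward direction (space form $\Rightarrow$ principal bundle with the stated $G$) is broadly on the right track: the vertical part of $[\mathscr{L}_{\rm R}(X),\mathscr{L}_{\rm R}(Y)]$ is built from $R$ and the $A$-conjugate of $\widehat R$, and constant curvature of $\widehat M$ is exactly what makes the $\widehat R$-contribution independent of the isometry $A$ and lets the fiber directions close into one of the three Lie algebras you name. One slip: for a principal bundle it is the \emph{structure group}, not the holonomy group of the connection, that acts simply transitively on fibers; your first paragraph conflates these two objects. For the converse direction your argument via ``maximal-dimension isometry group of $\widehat M$'' is a heuristic rather than a proof as written---the step from $G$-invariance of $\mathcal{D}_{\rm R}$ to a transitive isometric action on $\widehat M$ of dimension $\binom{n+1}{2}$ is not automatic, and this is exactly where the genericity assumptions in \cite{CGK2} carry real weight, as you yourself anticipate at the end.
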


According to~\cite[Proposition 2.3.7]{J}, it is possible to associate to a principal bundle a vector bundle via a representation of its structure group. In the non-Euclidean case $c\neq0$, using Theorem~\ref{th:ppal} and the canonical representation of $G$ on ${\mathbb R}^{n+1}$, we obtain the so-called {rolling connection} on the vector bundle $\pi\colon TM\oplus{\mathbb R}\to M$. 

\begin{defi}\label{def:conn}
The rolling connection $\nabla^{{\rm R},c}$, is a connection for the vector bundle $\pi\colon TM\oplus{\mathbb R}\to M$, given by
\[
\nabla^{{\rm R},c}_Y(X,r)=\Big(\nabla_YX+r(x)Y,Y(r)-cg(X_x,Y)\Big),
\]
where $x\in M$, $Y\in T_xM$, $X\in{\rm VF}(M)$ and $r\in C^\infty(M)$. 
\end{defi}

We therefore obtain a holonomy $H^{\nabla^{{\rm R},c}}\subset{\rm GL}(n+1)$ called the {\em rolling holonomy} group. Moreover, this connection is metric with respect to the fiber inner products
\begin{equation}\label{eq:hc}
h_c\big((X,r),(Y,s)\big)=g(X,Y)+c^{-1}rs.
\end{equation}
If $c<0$, then $H^{\nabla^{{\rm R},c}}\subset {\rm SO}_0(n,1)$, and $H^{\nabla^{{\rm R},c}}\subset{\rm SO}(n+1)$ when $c>0$. 


%

In what follows, we will normalize the value of the curvature $c\in{\mathbb R}$ to $c\in\{-1,1\}$. With this extra geometric information of the rolling system in the case $\widehat M$ is a space form, it is possible to give general answers to the controllability question. Using Theorem~\ref{th:ppal} and~\cite[Proposition 2.3.7]{J}, it is not difficult to see that the rolling system being controllable is equivalent to determining whether the system has full rolling holonomy group, that is, whether $H^{\nabla^{{\rm R},c}}=G$ in the theorem above. For the hyperbolic case $c=-1$, a full answer was given in~\cite{CGK1} by means of the following result

\begin{theorem}
For $c=-1$, the rolling system is not controllable if and only if there exists a complete simply connected Riemannian manifold $(M_1,g_1)$ such that $(M,g)$ is a warped product either of the form
\begin{itemize}
\item[{\rm (WP1)}] $({\mathbb R}\times M_1, ds^2\oplus_{e^{-s}} g_1)$, or

\item[{\rm (WP2)}] $({\mathbb H}^k\times M_1,{\bf g}^k\oplus_{\cosh(d)} g_1)$, where $1\leq k\leq n$, ${\bf g}^k$ is the usual hyperbolic metric and for each $x\in{\mathbb H}^k$, $d(x)$ is the distance between $x$ and an arbitrary fixed point $x_0\in{\mathbb H}^k$. 
\end{itemize}
\end{theorem}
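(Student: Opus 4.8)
The plan is to route the whole statement through the rolling holonomy group. By the remarks following Definition~\ref{def:conn}, the connection $\nabla^{{\rm R},-1}$ is metric for the Lorentzian fiber metric $h_{-1}$ of \eqref{eq:hc}, so $H^{\nabla^{{\rm R},-1}}\subset{\rm SO}_0(n,1)$, and the rolling system is controllable precisely when this inclusion is an equality. I would therefore restate non-controllability as: the standard representation of $H^{\nabla^{{\rm R},-1}}$ on the fiber $(T_xM\oplus\mathbb{R},h_{-1})\cong\mathbb{R}^{n,1}$ is \emph{reducible}, i.e.\ there is a proper holonomy-invariant subspace, equivalently a proper $\nabla^{{\rm R},-1}$-parallel subbundle $W\subset TM\oplus\mathbb{R}$. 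Because $\nabla^{{\rm R},-1}$ is $h_{-1}$-metric, $W^{\perp}$ is parallel as well, and everything is controlled by the causal type of $W$.

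The engine of the proof is the translation of parallel data into an equation on $M$. A $\nabla^{{\rm R},-1}$-parallel section $(X,r)$ satisfies $\nabla_Y X=-rY$ and $\mathrm{grad}\,r=-X$, which combine into the concircular-field equation
\[
\mathrm{Hess}\,r=r\,g .
\]
Metricity makes the causal type $h_{-1}\big((X,r),(X,r)\big)=|\mathrm{grad}\,r|^2-r^2$ constant, and along any unit-speed geodesic $f(s)=r(\gamma(s))$ solves $f''=f$; thus $f$ is a combination of $e^{s}$ and $e^{-s}$, and the conserved quantity $|\mathrm{grad}\,r|^2-r^2$ selects the profile. A \emph{null} parallel direction forces $|\mathrm{grad}\,r|=|r|$, hence the pure exponential $e^{-s}$; a \emph{non-degenerate} direction forces $r$ to have a critical point, hence the $\cosh$ profile.

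With this in hand I would split on the causal type of $W$. If $W$ is non-degenerate, then $\mathbb{R}^{n,1}=W\oplus W^{\perp}$ splits orthogonally into a Lorentzian $(k,1)$-summand and a Euclidean one; a Wu--de Rham decomposition transfers this to a metric splitting of $M$, and integrating the concircular fields on the Lorentzian factor (whose level sets realise a totally geodesic $\mathbb{H}^k$) yields exactly (WP2) with warping $\cosh(d)$, the parameter $k$ being the dimension of the hyperbolic factor. If $W$ is degenerate, it contains an invariant null line $L=W\cap W^{\perp}$; using the curvature of $\nabla^{{\rm R},-1}$ together with completeness one shows $L$ is spanned by a \emph{parallel} null section, whose concircular field has $|\mathrm{grad}\,r|=|r|$ and produces (WP1) with warping $e^{-s}$. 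Globalising each local warped product on the complete, simply connected $M$ is a Tashiro-type rigidity statement. The converse is then a direct verification: on (WP1) and (WP2) one exhibits the corresponding parallel null line, respectively the parallel Lorentzian $(k,1)$-subbundle, so $H^{\nabla^{{\rm R},-1}}$ is a proper subgroup and the system fails to be controllable.

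I expect the genuine difficulty to lie in the Lorentzian holonomy bookkeeping of the degenerate case, not in the ODE. In signature $(n,1)$ an invariant null line has no non-degenerate complement, so the de Rham argument breaks down; one must invoke the Wu decomposition and the B\'erard-Bergery--Ikemakhen classification of weakly-irreducible subgroups of ${\rm SO}_0(n,1)$, and then show that the special form of the rolling curvature rules out the generic ``pp-wave'' possibilities and forces an honest parallel null vector. The second delicate point is the passage from the pointwise equation $\mathrm{Hess}\,r=r\,g$ to a global warped product with the precise warping function and complete factors, which is exactly where the Tashiro--Kanai rigidity, rather than any holonomy input, is indispensable.
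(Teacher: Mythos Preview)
This theorem is not proved in the present paper. It is quoted as a result from \cite{CGK1} (``a full answer was given in~\cite{CGK1} by means of the following result''), so there is no proof here to compare your proposal against.

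That said, your outline is broadly the right architecture for such a result and is consistent with how the paper frames the spherical case: non-controllability is equivalent to $H^{\nabla^{{\rm R},-1}}$ being a proper subgroup of ${\rm SO}_0(n,1)$, i.e.\ reducibility of the standard representation on $(T_xM\oplus\mathbb{R},h_{-1})$; a parallel section $(X,r)$ gives $\mathrm{Hess}\,r=r\,g$; and the causal type of the invariant subbundle dictates the $e^{-s}$ versus $\cosh$ profile and hence (WP1) versus (WP2). Your identification of the two genuine difficulties --- the degenerate (null) case where de Rham fails and one needs Wu/B\'erard-Bergery--Ikemakhen input together with the specific rolling curvature, and the Tashiro--Kanai step from the local concircular equation to a global warped product with complete fiber --- is accurate.

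One caution on your sketch: in the non-degenerate case you say a Wu--de Rham decomposition ``transfers this to a metric splitting of $M$''. Be careful here: the splitting is of the bundle $TM\oplus\mathbb{R}$ with connection $\nabla^{{\rm R},-1}$, not directly of $(M,g)$ with its Levi-Civita connection, and the passage from a $\nabla^{{\rm R},-1}$-parallel subbundle to a warped (not direct) product of $M$ is exactly the content of the Tashiro-type argument rather than a holonomy splitting of $M$ itself. The $\mathbb{H}^k$ factor arises not as a de Rham factor of $M$ but as the locus described by the parallel Lorentzian summand via the concircular equation; this is the step that needs the most care in a full write-up.
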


In the spherical case, that is $c=1$, the situation is more complicated due to the non-trivial topology of spheres, see~\cite{CK2}. In fact, there is an almost complete answer in the case of even dimensional spheres. Recall that since $H^{\nabla^{{\rm R},1}}$ is a subgroup of ${\rm SO}(n+1)$, there is a natural action of $H^{\nabla^{{\rm R},1}}$ on the unit sphere $S^n$. In this framework, the following result holds, see~\cite{CK2}.

\begin{theorem}\label{th:CKSn}
Assume that the action of $H^{\nabla^{{\rm R},1}}$ on the unit sphere is not transitive, then $(M,g)$ admits the unit sphere as its universal covering space. In particular, in this situation the rolling distribution ${\mathcal D}_{\rm R}$ is involutive and, thus, the rolling system is not controllable.
\end{theorem}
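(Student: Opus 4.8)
The plan is to show that the non-transitivity hypothesis forces $(M,g)$ to have constant sectional curvature $+1$; the two remaining assertions then follow quickly. First I would compute the curvature of the rolling connection of Definition~\ref{def:conn}: a direct calculation gives
\[
R^{\nabla^{{\rm R},1}}(Y,Z)(X,r)=\big((R^M-R^{S^n})(Y,Z)X,\,0\big),
\]
where $R^{S^n}(Y,Z)X=g(X,Z)Y-g(X,Y)Z$ is the curvature of the unit sphere. This single tensor plays two roles: by Ambrose–Singer it generates the rolling holonomy $H:=H^{\nabla^{{\rm R},1}}$, and it is also the vertical part of the integrability obstruction (the \emph{rolling curvature}) of $\mathcal D_{\rm R}$, cf.~\cite{CK1,GGML}. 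I would also use the identification, obtained through the cone construction, of $H$ with the restricted Riemannian holonomy group of the Riemannian cone $C(M)=\big((0,\infty)\times M,\,dt^2+t^2g\big)$ acting on $\mathbb{R}^{n+1}\cong T_{(1,x_0)}C(M)$; indeed $\nabla^{{\rm R},1}$ is precisely the Levi--Civita connection of $C(M)$ restricted to the slice $\{t=1\}$.

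Since $M$ is simply connected, $H$ is connected. The key step is a dichotomy. By Gallot's theorem the cone over a complete manifold is either flat, in which case $(M,g)$ has constant curvature $1$, or its holonomy acts irreducibly on $\mathbb{R}^{n+1}$; moreover a non-flat cone is never locally symmetric. Hence, if $C(M)$ is not flat, Berger's classification forces $H$ to be one of ${\rm SO}(n+1)$, ${\rm U}(m)$, ${\rm SU}(m)$, ${\rm Sp}(m)$, ${\rm Sp}(m){\rm Sp}(1)$, ${\rm G}_2$ or ${\rm Spin}(7)$. Each of these acts transitively on the corresponding unit sphere (Montgomery--Samelson, Borel), contradicting the hypothesis. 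Therefore non-transitivity of the $H$-action forces $C(M)$ to be flat, so $(M,g)$ has constant sectional curvature $1$, and by the Killing--Hopf theorem its universal cover is $S^n$.

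It remains to read off the last two assertions, and here the curvature computation pays off a second time. With $M$ and $\widehat M = S^n$ both of curvature $1$ we have $R^M_x(X,Y)=X\wedge Y$, where $(X\wedge Y)Z=g(Y,Z)X-g(X,Z)Y$, and $R^{S^n}_{\widehat x}(AX,AY)=AX\wedge AY=A(X\wedge Y)A^{-1}$, the last equality because $A$ is a linear isometry. Thus the rolling curvature $A\,R^M_x(X,Y)A^{-1}-R^{S^n}_{\widehat x}(AX,AY)$ vanishes at every configuration $q=(x,\widehat x;A)$, so $\mathcal D_{\rm R}$ is involutive. By the Frobenius theorem its maximal integral manifolds have dimension $\operatorname{rank}\mathcal D_{\rm R}=n<\dim Q$; consequently points on distinct leaves cannot be joined by a rolling curve and the system is not controllable.

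The step I expect to be the main obstacle is the dichotomy of the second paragraph, which is where the non-transitivity hypothesis is genuinely used. Irreducibility alone does not suffice, since there are irreducible but non-transitive subgroups of ${\rm SO}(n+1)$ (for instance ${\rm SO}(3)\subset{\rm SO}(5)$ in its $5$-dimensional representation); these are excluded only because they cannot occur as holonomy groups of Riemannian cones. Making this rigorous requires carefully combining Gallot's irreducibility theorem, the non-existence of non-flat locally symmetric cones, Berger's list, and the classification of transitive actions on spheres.
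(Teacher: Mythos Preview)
The paper does not supply its own proof of this statement; it is quoted from~\cite{CK2}. Your argument is correct and is in fact the natural one once the cone picture is available: identify the rolling holonomy with the Riemannian holonomy of $C(M)$, invoke Gallot's theorem to get the dichotomy \emph{flat cone vs.\ irreducible holonomy}, rule out non-flat locally symmetric cones (e.g.\ because the Euler field $r\partial_r$ satisfies $\nabla(r\partial_r)=\mathrm{Id}$, so $R(\cdot,\cdot)(r\partial_r)=0$, and differentiating this with $\nabla R=0$ forces $R\equiv0$), and then observe that every group on Berger's list acts transitively on its unit sphere. The curvature formula you state for $\nabla^{{\rm R},1}$ is correct, and your deduction of the involutivity of $\mathcal D_{\rm R}$ from $R^M\equiv R^{S^n}$ is the standard one.

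One organizational remark worth making explicit: the identification of $H^{\nabla^{{\rm R},1}}$ with the holonomy of $C(M)$ that you use is exactly Theorem~\ref{th:hol} of the present paper, which is stated and proved \emph{after} Theorem~\ref{th:CKSn}. There is no circularity, since the proof of Theorem~\ref{th:hol} is a self-contained computation, but in the paper's logical order the result from~\cite{CK2} is imported as prior input and presumably proved there without passing through the cone. Your proof is thus a clean a~posteriori reconstruction using machinery the paper only develops subsequently; what it buys is conceptual transparency (the whole statement becomes an immediate corollary of Gallot plus Berger), at the cost of relying on a result placed later in the exposition.
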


Therefore, in order to understand the controllability of the system, one needs to have a complete knowledge of the subgroups of ${\rm SO}(n+1)$ acting transitively on the unit sphere. The following is a classical result answering this question.

\begin{theorem}[Montgomery-Samelson~\cite{MS}]\label{th:MS}
Let $H$ be a connected (compact) subgroup of ${\rm SO}(n+1)$ which acts transitively on the unit sphere $S^n\subset{\mathbb R}^{n+1}$. Then $H$ is one of the following groups:
\begin{enumerate}
\item ${\rm SO}(n+1)$,

\item ${\rm U}(m)$, if $n=2m-1$,

\item ${\rm SU}(m)$, if $n=2m-1$, 

\item ${\rm Sp}(k)$, if $n=4k-1$,

\item ${\rm Sp}(k)\cdot{\rm Sp}(1)$, if $n=4k-1$,

\item ${\rm Sp}(k)\cdot{\rm U}(1)$, if $n=4k-1$,

\item $G_2$, $n=6$,

\item ${\rm Spin}(7)$, $n=7$,

\item ${\rm Spin}(9)$, $n=15$.
\end{enumerate}
\end{theorem}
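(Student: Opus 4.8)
The plan is to treat the transitive action as exhibiting $S^n$ as a compact homogeneous space and to extract from this both topological and representation-theoretic rigidity. First I would fix $x\in S^n$, set $K=H_x$ for its isotropy subgroup, so that $S^n\cong H/K$ and in particular $\dim H=\dim K+n$. The opening reduction is that the inclusion $H\hookrightarrow{\rm SO}(n+1)$ must be irreducible as a real representation on $\real^{n+1}$: were there an invariant orthogonal splitting $\real^{n+1}=V_1\oplus V_2$ with both factors nontrivial, then every orbit would preserve the two norms $|v_1|,|v_2|$ and hence be confined to a product of spheres of dimension at most $n-1$, contradicting transitivity. So from the outset we may assume that $H$ acts $\real$-irreducibly.

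The key infinitesimal tool is that $H$ acts transitively on $S^n$ if and only if some unit vector $v$ has an open orbit, i.e. $\{Av\,|\,A\in\mathfrak{h}\}=v^{\perp}$ with $\mathfrak{h}=\operatorname{Lie}(H)\subseteq\mathfrak{so}(n+1)$; since orbits of a compact group are closed, an open orbit fills the connected sphere, and this recovers the count $\dim H-\dim K=n$. To organize the candidates I would invoke Schur's lemma: the commutant of $H$ in $\operatorname{End}(\real^{n+1})$ is a real division algebra, hence $\real$, $\comp$ or $\quat$. In the complex and quaternionic cases $\real^{n+1}$ carries an $H$-invariant complex, respectively quaternionic, structure, forcing $n+1=2m$ with $H\subseteq{\rm U}(m)$, respectively $n+1=4k$ with $H\subseteq{\rm Sp}(k)$; this is what singles out the odd-dimensional spheres $S^{2m-1}$ and $S^{4k-1}$. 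The presence of a normalizing central ${\rm U}(1)$ or of an ${\rm Sp}(1)$ rotating an invariant complex line or $3$-plane of such structures is what produces the entries ${\rm Sp}(k)\cdot{\rm U}(1)$ and ${\rm Sp}(k)\cdot{\rm Sp}(1)$, while the absolutely irreducible case accommodates ${\rm SO}(n+1)$ together with the three exceptional entries.

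It remains to decide, within each type, which compact connected groups actually achieve an open orbit. Here I would combine two constraints. First, the long exact homotopy sequence of the fibration $K\hookrightarrow H\to S^n$, together with $\pi_i(S^n)=0$ for $1\le i\le n-1$, rigidly controls the connectedness and fundamental group of $H$ and $K$ and eliminates many configurations. Second, I would run through the classification of compact simple Lie groups ($A_\ell$, $B_\ell$, $C_\ell$, $D_\ell$ and the exceptionals) and their irreducible representations of the correct real dimension and type, using Weyl's dimension formula to reduce to finitely many survivors and then checking the open-orbit condition directly. The infinite families are realized by the standard models ${\rm SO}(n+1)/{\rm SO}(n)$, ${\rm U}(m)/{\rm U}(m-1)$, ${\rm SU}(m)/{\rm SU}(m-1)$ and ${\rm Sp}(k)/{\rm Sp}(k-1)$ with their central extensions, while the three sporadic entries are the octonionic and spinorial representations: $G_2$ on the imaginary octonions $\real^7$, ${\rm Spin}(7)$ by its real $8$-dimensional spin representation, and ${\rm Spin}(9)$ by its real $16$-dimensional spin representation.

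The main obstacle is precisely this final exhaustive verification --- showing that no compact group and representation outside the list attains cohomogeneity zero. The dimension bookkeeping $\dim H=\dim K+n$ and the homotopy sequence prune the candidates drastically, but confirming the open-orbit condition $\mathfrak{h}\cdot v=v^{\perp}$ for each surviving low-rank and exceptional representation, and excluding the near-misses, is intricate and is where the genuine content of the Montgomery--Samelson theorem resides. This representation-by-representation case analysis is the step I expect to consume the bulk of the work, and it is the reason the result is quoted here as a classical input rather than reproved in full.
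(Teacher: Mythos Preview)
The paper does not prove this theorem at all: it is stated with attribution to Montgomery--Samelson~\cite{MS} and used as a classical black box, so there is no ``paper's own proof'' to compare your proposal against. You already recognize this in your final paragraph, and that recognition is accurate.

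Your outline is a reasonable sketch of the standard approach to the classification (irreducibility of the action, Schur's lemma to split into real/complex/quaternionic types, homotopy constraints from the fibration $K\hookrightarrow H\to S^n$, and then a case-by-case check against the list of compact simple groups and their low-dimensional irreducible representations). As you note, the hard content lies in the exhaustive elimination step, which your proposal does not attempt to carry out --- and neither does the paper. So there is no discrepancy: both you and the paper treat Montgomery--Samelson as an imported classical result.
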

As is usual in the holonomy literature, the notation $A\cdot B$ stands for $(A\times B)/{\mathbb Z}_2$. In both cases above where this notation appears, the subgroup ${\mathbb Z}_2$ can be seen in the standard real matrix representation simply as $\{\pm{\rm Id}\}$.

As a consequence of Theorems~\ref{th:CKSn} and~\ref{th:MS}, we have a characterization of the controllability of the system of $M$ rolling on $S^n$ in terms of the geometry of $M$, for almost all $n$ even, which improves the result~\cite[Corollary 4.7]{CK2}.


\begin{coro}
If $n$ is even and $n\neq6$, then the rolling system is completely controllable if and only if $(M,g)$ is not of constant curvature~1.
\end{coro}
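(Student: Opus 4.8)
The plan is to translate the statement entirely into a statement about the rolling holonomy group $H:=H^{\nabla^{{\rm R},1}}\subset {\rm SO}(n+1)$. Recall that, as noted above, the system is completely controllable if and only if $H={\rm SO}(n+1)$; since $M$ is simply connected, $H$ equals its restricted holonomy group and is therefore connected, and its closure $\overline H$ is a connected compact subgroup of ${\rm SO}(n+1)$. Thus it suffices to prove that $H$ is a proper subgroup of ${\rm SO}(n+1)$ if and only if $(M,g)$ has constant curvature $1$.

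First I would treat the implication that non-controllability forces constant curvature $1$. Assume $H\subsetneq {\rm SO}(n+1)$ and split into two cases according to whether $H$ acts transitively on $S^n$. If the action is \emph{not} transitive, then Theorem~\ref{th:CKSn} gives that the unit sphere is the universal covering space of $(M,g)$; since $M$ is simply connected this means $M$ is isometric to the round sphere $S^n$, i.e.\ $(M,g)$ has constant curvature $1$. If instead the action \emph{is} transitive, then $\overline H$ acts transitively as well, so by the Montgomery--Samelson Theorem~\ref{th:MS} the group $\overline H$ appears in the list. Here is the point where the hypotheses on $n$ enter: inspecting the list, every entry other than ${\rm SO}(n+1)$ requires $n$ odd, except for $G_2$, which forces $n=6$. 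Hence for $n$ even with $n\neq 6$ the only transitive possibility is $\overline H={\rm SO}(n+1)$, whence $H={\rm SO}(n+1)$, contradicting properness. So this case cannot occur and the first implication is proved.

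For the converse I would show directly that constant curvature $1$ makes the rolling connection flat. Computing the curvature of $\nabla^{{\rm R},1}$ from Definition~\ref{def:conn} on a pair of vectors $Y,W\in T_xM$ one finds that, for $(X,r)\in T_xM\oplus{\mathbb R}$, the fibre endomorphism is $R^{\nabla^{{\rm R},1}}(Y,W)(X,r)=\big(R(Y,W)X+g(X,Y)W-g(X,W)Y,\;0\big)$, where $R$ is the Riemann tensor of $M$ (in particular it is independent of $r$). This vanishes identically exactly when $R(Y,W)X=g(X,W)Y-g(X,Y)W$, i.e.\ precisely when $(M,g)$ has constant sectional curvature $1$. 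In that case $\nabla^{{\rm R},1}$ is flat, and since $M$ is simply connected its holonomy $H$ is trivial, hence a proper subgroup of ${\rm SO}(n+1)$; by the equivalence above the system is not controllable. Combining the two implications yields the corollary.

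I would expect the main obstacle to be the converse direction rather than the forward one: the forward direction is essentially bookkeeping once Theorems~\ref{th:CKSn} and~\ref{th:MS} are in hand and the list is pruned for even $n\neq6$, whereas the converse relies on the explicit curvature identity for $\nabla^{{\rm R},1}$. One should also be careful to justify that $H$ (or at least its closure $\overline H$) is a connected compact group so that Montgomery--Samelson applies, and to record that the exclusion of $n=6$ is forced precisely by the appearance of $G_2$ as an exceptional transitive subgroup of ${\rm SO}(7)$.
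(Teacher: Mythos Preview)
Your argument is correct and matches the paper's route: the corollary is stated without proof as an immediate consequence of Theorems~\ref{th:CKSn} and~\ref{th:MS}, and your forward case split (non-transitive versus transitive action of $H^{\nabla^{{\rm R},1}}$ on $S^n$, the latter being eliminated for even $n\neq 6$ by the Montgomery--Samelson list) is exactly that reasoning. For the converse the paper implicitly relies on Theorem~\ref{th:CKSn} (involutivity of $\mathcal D_{\rm R}$ when $M$ is covered by $S^n$) or on~\cite{CK2}, whereas you compute directly that the curvature of $\nabla^{{\rm R},1}$ vanishes precisely when $(M,g)$ has constant sectional curvature~$1$; this is an equivalent and correct way to finish.
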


Therefore, to complete the picture for the case of $\widehat M$ a space form, it remains to study the controllability problem in the following situations:
\begin{description}
\item[(Even)] The manifold $M$ (and thus the sphere) has dimension $6$.

\item[(Odd)] The manifold $M$ is odd dimensional.
\end{description}

The aim of this paper is to start the study of the case {\bf (Odd)}, when the action of the rolling holonomy group $H^{\nabla^{{\rm R},c}}$ on $S^n$ is transitive. 

\section{Holonomy of cones and rolling holonomy}

\subsection{Sasakian, Einstein-Sasakian and 3-Sasakian manifolds}

The aim of this subsection is to present the definition of the manifolds of our concern. For a full and detailed study of the geometry and topology of these manifolds, we refer the interested reader to~\cite{BG}. We will employ the canonical identification of $(1,1)$-tensor fields with fields of endomorphisms.

An {\em almost contact structure} on a manifold $M$ is a triplet $(\xi,\eta,\Phi)$, where $\eta\in\Omega^1(M)$ is a one-form, $\xi\in{\rm VF}(M)$ is a vector field, and $\Phi$ is a $(1,1)$-tensor field satisfying
\[
\eta(\xi)=1\quad\mbox{and}\quad\Phi^2=-{\rm id}_{TM}+\xi\otimes\eta.
\]
An almost contact structure $(\xi,\eta,\Phi)$ on a manifold $M$ is a {\em contact structure} if the one form $d\eta(\xi,\cdot)$ vanishes identically. In this situation, $\xi$ is called {\em Reeb vector field} and $\eta$ is called {\em contact form}.

A {\em contact metric structure} on a manifold $M$ is a quadruplet $(\xi,\eta,\Phi,g)$, where the triplet $(\xi,\eta,\Phi)$ is a contact structure and $g$ is a Riemannian metric satisfying the condition
\[
g(\Phi X,\Phi Y)=g(X,Y)-\eta(X)\eta(Y),
\]
for all vector fields $X,Y\in{\rm VF}(M)$. We say that the tensor field $\Phi$ is {\em compatible} with respect to the metric $g$.

Recall that the {\em cone} $(C(M),\tilde g)$ of a Riemannian manifold $(M,g)$ is the warped product
\[
C(M)=M\times{\mathbb R}_+,\quad\tilde g=r^2g+dr^2.
\]
It is well-known, see~\cite[Proposition 6.5.2]{BG}, that there is a one-to-one correspondence between the set of almost contact structures $(\xi,\eta,\Phi)$ on $M$ and almost complex structures $I$ on $C(M)$ satisfying some natural conditions.

A {Sasakian structure} on a manifold $M$ is a contact metric structure $(\xi,\eta,\Phi,g)$ on $M$, where the corresponding almost complex structure $I$ on $C(M)$ is integrable.
 
\begin{defi}
A Riemannian manifold $(M,g)$ with a Sasakian structure $(\xi,\eta,\Phi,g)$ is called a {Sasakian manifold}, and the Reeb vector field $\xi$ is called the {characteristic vector field}.
\end{defi}

A {\em Sasaki-Einstein manifold} $M$ is a Sasakian manifold which is also Einstein, that is, its Ricci tensor ${\rm Ric}_g$ is a constant multiple of the metric $g$. It follows from some general considerations, see~\cite{S}, that in the case that the dimension of $M$ is $2m+1$, the Einstein equation takes the form
\[
{\rm Ric}_g=2mg.
\]

In a similar form to the definition of Sasakian structure, we can define a ``quaternionic'' analogue, based on \cite[Proposition 1.2.2]{BG1}. 

\begin{defi}\label{def:3sas}
A manifold $M$ is {3-Sasakian} if it admits three Sasakian structures with characteristic vector fields $\xi_1,\xi_2,\xi_3$ which are orthonormal and satisfy $[\xi_a,\xi_b]=2\epsilon_{a,b,c}\xi_c$ for $\{a,b,c\}=\{1,2,3\}$, where $\epsilon_{a,b,c}$ is the sign of the permutation $\begin{pmatrix}1&2&3\\a&b&c\end{pmatrix}$.
\end{defi}

\subsection{Comparison of holonomies}

The following table contains the equivalent formulation of several geometric structures over a Riemannian manifold $N$ of dimension $n$ in terms of its Riemannian holonomy $H^{\nabla}$. Some of these are classic, and the reader can consult~\cite{B,BG} for details.

\begin{center}
\begin{table}[h]
\caption{Geometric structures in terms of holonomy.}\label{table:hol}
\begin{tabular}{|c|c|c|}
\hline
Dimension of $N$&Name&Riemannian Holonomy
\\
\hline \hline
$n=2k$&K\"ahler&$H^{\nabla}\subset{\rm U}(k)$ \\
\hline
$n=2k$&Calabi-Yau&$H^{\nabla}\subset{\rm SU}(k)$ \\
\hline
$n=4k$&Hyper-K\"ahler&$H^{\nabla}\subset{\rm Sp}(k)$ \\
\hline
$n=4k$&Quaternionic K\"ahler&$H^{\nabla}\subset{\rm Sp}(k)\cdot{\rm Sp}(1)$ \\
\hline
\end{tabular}
\end{table}
\end{center}

It is well-known that the holonomy of $(C(M),\tilde g)$ encodes important geometric information about the manifold $M$. Let us briefly recall some well-known equivalences that can be found, for instance, in~\cite{BG}.

\begin{theorem}\label{th:holcone}
The following equivalences hold:
\begin{itemize}
\item $(M,g)$ is a Sasakian manifold if, and only if, $(C(M),\tilde g)$ is K\"ahler.

\item $(M,g)$ is a Sasaki-Einstein manifold if, and only if, $(C(M),\tilde g)$ is Calabi-Yau.

\item $(M,g)$ is a 3-Sasakian manifold if, and only if, $(C(M),\tilde g)$ is hyper-K\"ahler.
\end{itemize}
\end{theorem}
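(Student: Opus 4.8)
The plan is to push all three statements to the metric cone and then read them off the holonomy dictionary of Table~\ref{table:hol}. Write $n=\dim M$, so $\dim C(M)=n+1$, and recall from \cite[Proposition 6.5.2]{BG} the one-to-one correspondence between the almost contact structure $(\xi,\eta,\Phi)$ and the almost complex structure $I$ on $C(M)$ given, on lifts of $X\in{\rm VF}(M)$, by
\[
IX=\Phi X-\eta(X)\,r\partial_r,\qquad I(r\partial_r)=\xi .
\]
Using $\Phi^2=-{\rm id}+\xi\otimes\eta$ and $\eta\circ\Phi=0$ one checks $I^2=-{\rm id}$, and the compatibility $g(\Phi X,\Phi Y)=g(X,Y)-\eta(X)\eta(Y)$ gives at once $\tilde g(IU,IV)=\tilde g(U,V)$, so that $(C(M),\tilde g,I)$ is almost Hermitian. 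A short computation from these formulas, using the contact metric relation $d\eta(X,Y)=2g(\Phi X,Y)$ on $\ker\eta$, identifies its fundamental two-form as $\omega=\tfrac12\,d(r^2\eta)$, which is \emph{exact} and hence closed.

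For the first equivalence I use that an almost Hermitian manifold is K\"ahler if and only if it is both integrable and satisfies $d\omega=0$. Since $d\omega=0$ always holds here, $(C(M),\tilde g)$ is K\"ahler exactly when $I$ is integrable, which is by definition what it means for $(M,g)$ to be Sasakian; the converse direction is covered by the same bijection, since on a cone the homothetic field $r\partial_r$ forces $\xi=I(r\partial_r)$ to be a unit Killing field, recovering the contact metric data on $M$. Alternatively one may verify $\nabla^{\tilde g}I=0$ directly: writing the Levi-Civita connection of the warped product $\tilde g=dr^2+r^2g$ in terms of $\nabla^g$ through the standard O'Neill formulas and differentiating $I$, the identity $\nabla^{\tilde g}I=0$ collapses precisely to the Sasakian relations satisfied by $(\xi,\eta,\Phi,g)$.

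For the second equivalence, Table~\ref{table:hol} shows that a simply connected K\"ahler manifold is Calabi-Yau exactly when it is Ricci-flat, and $C(M)$ is simply connected because it deformation retracts onto the simply connected $M$. The warped-product Ricci tensor of the cone satisfies ${\rm Ric}_{\tilde g}(r\partial_r,\cdot)=0$ and ${\rm Ric}_{\tilde g}(X,Y)={\rm Ric}_g(X,Y)-(n-1)g(X,Y)$ for $X,Y$ tangent to $M$, so ${\rm Ric}_{\tilde g}=0$ if and only if ${\rm Ric}_g=(n-1)g=2mg$ with $n=2m+1$; combined with the first equivalence this is exactly the Sasaki-Einstein condition recorded above. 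For the third equivalence, a triple $(\xi_a,\eta_a,\Phi_a,g)$ of Sasakian structures yields, by the first part, three parallel compatible complex structures $I_1,I_2,I_3$ on $C(M)$, and the orthonormality of the $\xi_a$ together with $[\xi_a,\xi_b]=2\epsilon_{a,b,c}\xi_c$ translate into the quaternionic identities $I_aI_b=\epsilon_{a,b,c}I_c-\delta_{ab}\,{\rm id}$; by Table~\ref{table:hol} the existence of such a parallel quaternionic triple is precisely $H^{\nabla}\subset{\rm Sp}(k)$, i.e.\ hyper-K\"ahler, and the inverse correspondence recovers the 3-Sasakian axioms from a hyper-K\"ahler structure.

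The main obstacle is the heart of the first equivalence: establishing that integrability of $I$ is genuinely equivalent to $\nabla^{\tilde g}I=0$ in this setting, which rests on the identity $\omega=\tfrac12 d(r^2\eta)$ and on the warped-product connection computation relating $\nabla^{\tilde g}$ to $\nabla^g$. Once this is secured, the Einstein and quaternionic refinements reduce to bookkeeping with the holonomy characterizations of Table~\ref{table:hol}; the only further care needed is matching the algebraic 3-Sasakian relations, with their normalization constants, to the quaternion relations under the correspondence.
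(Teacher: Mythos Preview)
The paper does not actually supply a proof of this theorem: it is stated as a recollection of ``well-known equivalences that can be found, for instance, in~\cite{BG}'' and is used as a black box. So there is no in-paper argument to compare yours against.

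That said, your sketch is essentially the standard argument one finds in \cite{BG} and \cite{S}, and it is correct in outline. Given the paper's own definition of Sasakian (integrability of the associated $I$ on the cone), the first equivalence reduces, as you note, to the purely complex-geometric fact that an almost Hermitian structure with $d\omega=0$ is K\"ahler iff it is integrable, together with the cone identity $\omega=\tfrac12\,d(r^2\eta)$. Your Ricci computation for the second equivalence matches \cite[Proposition~1.9]{S}, which the paper itself invokes in the Heisenberg example. For the third, the translation of the bracket relations $[\xi_a,\xi_b]=2\epsilon_{a,b,c}\xi_c$ into the quaternionic identities for $I_1,I_2,I_3$ is the right mechanism; just be careful with the sign convention, since later in the paper (Theorem~\ref{th:3cont}) the normalization is $J_i^{\rm R}J_j^{\rm R}=-\epsilon_{i,j,k}J_k^{\rm R}$. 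Nothing in your proposal is wrong, but since the paper treats the result as imported, a one-line citation would be in keeping with the exposition.
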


Before proceeding with one of the main results of the present paper, let us state a technical lemma.

\begin{lemma}\label{lem:covder}
Let $X,Y\in{\rm VF}(M)$, $b\in C^\infty(M)$ and $(x,s)\in C(M)=M\times{\mathbb R}_+$. Then
\begin{align*}
\tilde\nabla_X|_{(x,s)}(Y+b\partial_s)&=\nabla_X|_xY+\frac{b(x)}{s}X+\big(X|_x(b)-sg(X|_x,Y|_x)\big)\partial_s|_{(x,s)},\\
\tilde\nabla_{\partial_s}|_{(x,s)}(Y+b\partial_s)&=\frac1{s}Y|_x,
\end{align*}
where $\tilde\nabla$ and $\nabla$ denote the Levi-Civita connection on  the cone $(C(M),\tilde g)$ and on $(M,g)$, respectively.
\end{lemma}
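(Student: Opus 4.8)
The plan is to exploit the $\real$-linearity and Leibniz properties of $\tilde\nabla$ so that the whole statement reduces to computing the Levi-Civita connection on four ``building-block'' fields, namely $\tilde\nabla_XY$, $\tilde\nabla_X\partial_s$, $\tilde\nabla_{\partial_s}Y$ and $\tilde\nabla_{\partial_s}\partial_s$, where $X,Y\in{\rm VF}(M)$ are regarded as $s$-independent fields on $C(M)$. Indeed, writing out $Y+b\partial_s$ one gets $\tilde\nabla_A(Y+b\partial_s)=\tilde\nabla_AY+A(b)\partial_s+b\,\tilde\nabla_A\partial_s$ for $A\in\{X,\partial_s\}$; since $b\in C^\infty(M)$ is $s$-independent we have $\partial_s(b)=0$, while $X(b)$ is the usual derivative on $M$. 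Thus, once the four building blocks are known, both identities follow by direct substitution.

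To compute the building blocks I would use the Koszul formula for $\tilde\nabla$, reading off each covariant derivative from its $\tilde g$-inner products against the spanning fields $Z\in{\rm VF}(M)$ and $\partial_s$. The facts needed are: in the coordinate $s$ of the statement the cone metric reads $\tilde g=s^2g+ds^2$, so $\tilde g(X,Y)=s^2g(X,Y)$, $\tilde g(X,\partial_s)=0$ and $\tilde g(\partial_s,\partial_s)=1$; the Lie brackets satisfy $[X,Y]=[X,Y]_M$ (the lift of the bracket on $M$) and $[\partial_s,X]=0$; and for the pulled-back function $g(X,Y)$ one has $\partial_s\big(g(X,Y)\big)=0$ and $X(s)=0$, so that $\partial_s\tilde g(X,Y)=2s\,g(X,Y)$ and $X\tilde g(Y,Z)=s^2X\big(g(Y,Z)\big)$. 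Feeding these into Koszul, the inner products of $\tilde\nabla_XY$ against lifted fields $Z$ reproduce exactly $s^2$ times the Koszul expression on $M$, which yields the tangential component $\nabla_XY$, while the single surviving term against $\partial_s$ gives $\tilde g(\tilde\nabla_XY,\partial_s)=-s\,g(X,Y)$; hence $\tilde\nabla_XY=\nabla_XY-s\,g(X,Y)\partial_s$. The same bookkeeping gives $\tilde\nabla_X\partial_s=\tilde\nabla_{\partial_s}X=\tfrac1sX$ (the only nonvanishing Koszul term being $\partial_s\tilde g(X,Z)=2s\,g(X,Z)=2\tilde g(\tfrac1sX,Z)$) and $\tilde\nabla_{\partial_s}\partial_s=0$.

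Substituting these into the decomposition of the first paragraph yields
\[
\tilde\nabla_X(Y+b\partial_s)=\nabla_XY-s\,g(X,Y)\partial_s+X(b)\partial_s+\frac{b}{s}X,
\]
and
\[
\tilde\nabla_{\partial_s}(Y+b\partial_s)=\frac1sY,
\]
which are precisely the claimed identities after regrouping the $\partial_s$-components. There is no genuine conceptual obstacle here, as these are the standard O'Neill warped-product formulas for the cone $\mathbb{R}_+\times_s M$; the only real care required is bookkeeping: keeping track of which fields are $s$-independent (so that $\partial_s$ differentiates only the warping factor $s^2$ and annihilates $b$), correctly matching the Koszul terms against the two types of test fields, and verifying that the tangential part of $\tilde\nabla_XY$ is genuinely the Levi-Civita connection $\nabla$ of $(M,g)$ rather than merely some torsion-free connection.
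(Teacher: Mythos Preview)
Your argument is correct and follows essentially the same route as the paper: reduce to the four ``building blocks'' $\tilde\nabla_XY$, $\tilde\nabla_X\partial_s=\tilde\nabla_{\partial_s}X$, and $\tilde\nabla_{\partial_s}\partial_s$, then assemble the result by linearity and the Leibniz rule. The only difference is that the paper simply quotes the warped-product formulas from O'Neill, whereas you re-derive them via Koszul; this makes your version slightly more self-contained but is not a different approach.
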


\begin{proof}
As noted in~\cite[p.206]{ON}, from the Koszul formula it follows that on a warped product of the form $(C(M),\tilde g)$ we have that 
\begin{align*}
\tilde\nabla_X|_{(x,s)}\partial_s&=\tilde\nabla_{\partial_s}|_{(x,s)}X=\frac1{s}\partial_s|_{(x,s)},\\
\tilde\nabla_X|_{(x,s)}Y&=\nabla_X|_xY-sg(X|_x,Y|_x)\partial_s|_{(x,s)},\\
\tilde\nabla_{\partial_s}|_{(x,s)}\partial_s&=0.
\end{align*}
The formulas in the statement follow from these and the standard calculation rules for affine connections.
\end{proof}

The following result relates the Riemannian holonomy $H^{\tilde\nabla}$ of $(C(M),\tilde g)$ with the rolling holonomy obtained by rolling $M$ over the unit sphere. We denote the rolling connection $\nabla^{{\rm R},1}$ by $\nabla^{\rm R}$, and the corresponding holonomy by $H^{\nabla^{\rm R}}$.

\begin{theorem}\label{th:hol}
Let $(M,g)$ be a Riemannian manifold. We have for every $(x, s) \in C(M)$ that $H^{\tilde\nabla}|_{(x,s)}$ is isomorphic to $H^{\nabla^{\rm R}} |_x$.
More precisely, defining
\[
\begin{array}{ccccc}{\mathcal I}_{(x,s)}&\colon&T_{(x,s)}C(M)&\to&T_xM\times{\mathbb R}\\&&X+b\partial_s|_{(x,s)}&\mapsto&(sX,b)\end{array}
\]
then for all loops $\Gamma(t) = (\gamma(t), a(t))$ of $C(M)$ based at $(x, s)$ one has
\[
{\mathcal I}_{(x,s)}\circ \big(P^{\tilde\nabla}\big)_0^1(\Gamma)=\big(P^{\nabla^{\rm R}}\big)_0^1(\gamma)\circ{\mathcal I}_{(x,s)}.
\]
\end{theorem}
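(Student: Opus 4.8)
The plan is to show that the linear isomorphism $\mathcal{I}_{(x,s)}$ intertwines parallel transport along any loop, from which the isomorphism of holonomy groups follows immediately by considering all loops based at $(x,s)$. Since both holonomy groups are generated by parallel transport around loops, the displayed conjugation identity ${\mathcal I}_{(x,s)}\circ (P^{\tilde\nabla})_0^1(\Gamma)=(P^{\nabla^{\rm R}})_0^1(\gamma)\circ{\mathcal I}_{(x,s)}$, once established for every loop $\Gamma$, directly yields $\mathcal{I}_{(x,s)}\circ H^{\tilde\nabla}|_{(x,s)}\circ \mathcal{I}_{(x,s)}^{-1}=H^{\nabla^{\rm R}}|_x$, which is the asserted isomorphism.

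The core of the argument is to verify that $\mathcal{I}$ is a connection isomorphism, i.e.\ that it carries $\tilde\nabla$ to $\nabla^{\rm R}$ along curves. First I would set up a family of isomorphisms $\mathcal{I}_{(\gamma(t),a(t))}\colon T_{\Gamma(t)}C(M)\to T_{\gamma(t)}M\times{\mathbb R}$ along the loop $\Gamma$, each given by the same formula $X+b\partial_s\mapsto(a(t)X,b)$. Given a $\tilde\nabla$-parallel field $\sigma(t)=X(t)+b(t)\partial_s$ along $\Gamma$, I would compute its image $\mathcal{I}(\sigma(t))=(a(t)X(t),b(t))$ and show directly that this image is $\nabla^{\rm R}$-parallel along $\gamma$. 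The computation rests on Lemma~\ref{lem:covder}: expanding $\tilde\nabla_{\dot\Gamma}\sigma=0$ using the two formulas there (with the radial component $\dot a(t)\partial_s$ contributing the $\tilde\nabla_{\partial_s}$ term), one reads off the evolution equations for $X(t)$ and $b(t)$. The goal is then to match these against the condition that $(a(t)X(t),b(t))$ be $\nabla^{\rm R}$-parallel, i.e.\ $\nabla^{\rm R}_{\dot\gamma}(aX,b)=0$, where the rolling connection is the one in Definition~\ref{def:conn} with $c=1$.

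The key bookkeeping step is the chain rule for the $\frac1s$ factors and the warping: the component $a(t)X(t)$ has derivative $\dot a X+a\dot X$ (along $\gamma$, using $\nabla$), and the term $\frac{b}{s}X$ in Lemma~\ref{lem:covder} must be reconciled with the term $r(x)Y$ appearing in the rolling connection after multiplying through by $a=s$. Concretely, substituting $s=a(t)$ and carefully tracking where the factor of $s$ in the definition of $\mathcal{I}$ (namely $X\mapsto sX$) converts between the cone's $\frac1s$-weighted Christoffel terms and the rolling connection's unweighted terms should make the two parallel-transport ODEs coincide. The scalar equation $Y(r)-cg(X,Y)$ from $\nabla^{\rm R}$ with $c=1$ should line up with the $\partial_s$-component $\big(X(b)-sg(X,Y)\big)\partial_s$ from the lemma after the appropriate substitution.

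I expect the main obstacle to be purely the careful matching of scaling factors: because $\mathcal{I}$ rescales the horizontal part by $s$ while the cone connection carries explicit $\frac1s$ factors, one must verify that these cancel correctly so that the resulting ODE for $(aX,b)$ is \emph{exactly} the rolling parallel-transport equation and not merely proportional to it. There is a subtlety in that $s=a(t)$ is now varying along the loop, so $\mathcal{I}$ is genuinely $t$-dependent; the derivative of the scaling factor $\dot a(t)$ must be shown to match the contribution of the radial component of $\dot\Gamma$ through the second formula of Lemma~\ref{lem:covder}. Once this identity of ODEs is established, uniqueness of solutions to linear parallel-transport equations gives $\mathcal{I}_{\Gamma(1)}\circ (P^{\tilde\nabla})_0^1=(P^{\nabla^{\rm R}})_0^1\circ \mathcal{I}_{\Gamma(0)}$, and since $\Gamma$ is a loop based at $(x,s)$ we have $\mathcal{I}_{\Gamma(1)}=\mathcal{I}_{\Gamma(0)}=\mathcal{I}_{(x,s)}$, completing the proof.
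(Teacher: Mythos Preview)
Your proposal is correct and follows essentially the same route as the paper: use Lemma~\ref{lem:covder} to compute $\tilde\nabla_{\dot\Gamma}(Y+b\partial_s)$, verify that the $\tfrac1{a}$ and $\dot a$ factors combine so that this equals $\mathcal I_{\Gamma(t)}^{-1}\bigl(\nabla^{\rm R}_{\dot\gamma}(aY,b)\bigr)$, and then deduce the intertwining of parallel transports. The only cosmetic difference is that the paper establishes the identity $\tilde\nabla_{\dot\Gamma}\sigma=\mathcal I^{-1}\nabla^{\rm R}_{\dot\gamma}(\mathcal I\sigma)$ for arbitrary $\sigma$ before specializing to parallel fields, whereas you phrase it as ``parallel implies parallel''; both lead to the same conclusion by uniqueness of ODE solutions.
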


\begin{proof}
Let $Y(t)+b(t)\partial_s|_{\Gamma(t)}$ be a vector field on $C(M)$ along a curve $\Gamma(t)=(\gamma(t),a(t))$. By definition $Y(t)$ is a vector field on $M$ along $\gamma$, and  $\dot\Gamma(t)=\big(\dot\gamma(t),\dot a(t)\partial_s|_{\Gamma(t)}\big)$. Using the formulas in Lemma~\ref{lem:covder}, we see that
\begin{align*}
\tilde\nabla_{\dot\Gamma(t)}(Y(t)+b(t)\partial_s)&=\nabla_{\dot\gamma(t)}Y(t)+\frac{b(t)}{a(t)}\dot\gamma(t)+\\
&+\big(\dot b(t)-a(t)g(\dot\gamma(t),Y(t))\big)\partial_s|_{\Gamma(t)}+\frac{\dot a(t)}{a(t)}Y(t)\\
&=\frac1{a(t)}\big(\nabla_{\dot\gamma(t)}(a(t)Y(t))+b(t)\dot\gamma(t)\big)+\\
&+\big(\dot b(t)-g(\dot\gamma(t),a(t)Y(t))\big)\partial_s|_{\Gamma(t)}\big)\\
&={\mathcal I}_{\Gamma(t)}^{-1}\big(\nabla^{\rm R}_{\dot\gamma(t)}(aY,b)\big)={\mathcal I}_{\Gamma(t)}^{-1}\big(\nabla^{\rm R}_{\dot\gamma(t)}({\mathcal I}_{\Gamma(\cdot)}(Y+b\partial_s))\big).
\end{align*}

Suppose now that $Y(t)+b(t)\partial_s|_{\Gamma(t)}$ is a parallel vector field along $\Gamma$. We obtain the following equations from the discussion above
\begin{align*}
Y(t)+b(t)\partial_s|_{\Gamma(t)}&=\big(P^{\tilde\nabla}\big)_0^t(\Gamma)\big(Y(0)+b(0)\partial_s|_{\Gamma(0)}\big)\\
{\mathcal I}_{\Gamma(t)}\big(Y(t)+b(t)\partial_s|_{\Gamma(t)}\big)&=\big(P^{\nabla^{\rm R}}\big)_0^t(\gamma)\big(a(0)Y(0),b(0)\big).
\end{align*}
Therefore
\begin{multline*}
\Big({\mathcal I}_{\Gamma(t)}\circ\big(P^{\tilde\nabla}\big)_0^t(\Gamma)\Big)\big(Y(0)+b(0)\partial_s|_{\Gamma(0)}\big)=\big(P^{\nabla^{\rm R}}\big)_0^t(\gamma)\big(a(0)Y(0),b(0)\big)\\
=\Big(\big(P^{\nabla^{\rm R}}\big)_0^t(\gamma)\circ{\mathcal I}_{\Gamma(0)}\Big)\big(Y(0)+b(0)\partial_s|_{\Gamma(0)}\big).
\end{multline*}

Since $Y(0)$ and $b(0)$ are arbitrary, we have that the equality
\[
{\mathcal I}_{\Gamma(t)}\circ\big(P^{\tilde\nabla}\big)_0^t(\Gamma)=\big(P^{\nabla^{\rm R}}\big)_0^t(\gamma)\circ{\mathcal I}_{\Gamma(0)}
\]
holds for all $t\in[0,1]$. Finally, assuming that $\Gamma(t)$ is a loop in $C(M)$ and taking $t=1$ in the formula above, we obtain the desired result.
\end{proof}

Two rather immediate consequences of Theorems~\ref{th:holcone} and~\ref{th:hol} that are relevant for the rolling system are the following.
\begin{coro}\label{cor:classif}
Let $(M,g)$ be a connected, simply connected, oriented Riemannian manifold of dimension $n=2m+1$ rolling on $S^{2m+1}$. The following implications hold:
\begin{itemize}
\item if $H^{\nabla^{\rm R}}\subset{\rm U}(m+1)$, then $(M,g)$ is a Sasakian manifold,
\item if $H^{\nabla^{\rm R}}\subset{\rm SU}(m+1)$, then $(M,g)$ is a Sasaki-Einstein manifold, and
\item if $H^{\nabla^{\rm R}}\subset{\rm Sp}(k+1)$, if $n=4k+3$, then $(M,g)$ is a 3-Sasakian manifold.
\end{itemize}
\end{coro}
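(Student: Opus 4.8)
The plan is to read off all three implications from a single chain of equivalences assembled from the three tools already at our disposal: the holonomy isomorphism of Theorem~\ref{th:hol}, the holonomy criteria of Table~\ref{table:hol}, and the cone correspondence of Theorem~\ref{th:holcone}. Since $M$ is connected and simply connected of dimension $n=2m+1$, its cone $C(M)=M\times{\mathbb R}_+$ is a connected, simply connected Riemannian manifold of even dimension $n+1=2(m+1)$; hence the restricted and full holonomy of $\tilde\nabla$ coincide and every holonomy-reduction statement applies without caveat. The strategy for the first bullet is then the implication chain: $H^{\nabla^{\rm R}}\subset{\rm U}(m+1)$ forces $H^{\tilde\nabla}\subset{\rm U}(m+1)$, which by Table~\ref{table:hol} (with $k=m+1$) makes $C(M)$ K\"ahler, which by Theorem~\ref{th:holcone} makes $M$ Sasakian. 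The remaining two bullets run identically.

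The one point that is not purely formal, and which I would record first, is that the linear map ${\mathcal I}_{(x,s)}$ of Theorem~\ref{th:hol} is in fact a \emph{linear isometry} from $(T_{(x,s)}C(M),\tilde g)$ onto $(T_xM\times{\mathbb R},h_1)$, where $h_1$ is the fiber metric~\eqref{eq:hc} specialized to $c=1$. This matters because the containments $H\subset{\rm U}(m+1)$, $H\subset{\rm SU}(m+1)$ and $H\subset{\rm Sp}(k+1)$ are conjugacy-invariant statements inside the orthogonal group, so transporting them along ${\mathcal I}_{(x,s)}$ is legitimate only if that map respects the metrics. A direct computation settles it: for $X+b\partial_s,\,Y+d\partial_s\in T_{(x,s)}C(M)$ one has $\tilde g(X+b\partial_s,Y+d\partial_s)=s^2g(X,Y)+bd=g(sX,sY)+bd=h_1\big({\mathcal I}_{(x,s)}(X+b\partial_s),{\mathcal I}_{(x,s)}(Y+d\partial_s)\big)$. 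Granting this, the intertwining identity ${\mathcal I}_{(x,s)}\circ\big(P^{\tilde\nabla}\big)_0^1(\Gamma)=\big(P^{\nabla^{\rm R}}\big)_0^1(\gamma)\circ{\mathcal I}_{(x,s)}$ of Theorem~\ref{th:hol} shows that ${\mathcal I}_{(x,s)}$ conjugates $H^{\tilde\nabla}|_{(x,s)}$ onto $H^{\nabla^{\rm R}}|_x$ as subgroups of the orthogonal group, so each of the three reduction conditions holds for one holonomy precisely when it holds for the other.

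With the isometry in hand I would simply run the three cases while tracking dimensions. For the Sasakian and Sasaki-Einstein cases $\dim C(M)=2(m+1)$, so Table~\ref{table:hol} applies with $k=m+1$: the hypothesis $H^{\nabla^{\rm R}}\subset{\rm U}(m+1)$ yields $C(M)$ K\"ahler, and $H^{\nabla^{\rm R}}\subset{\rm SU}(m+1)$ yields $C(M)$ Calabi-Yau; Theorem~\ref{th:holcone} then gives that $M$ is Sasakian, respectively Sasaki-Einstein. For the 3-Sasakian case $n=4k+3$, so $\dim C(M)=4(k+1)$ and Table~\ref{table:hol} applies with parameter $k+1$: the hypothesis $H^{\nabla^{\rm R}}\subset{\rm Sp}(k+1)$ makes $C(M)$ hyper-K\"ahler, and Theorem~\ref{th:holcone} makes $M$ 3-Sasakian. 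The only thing to be careful about is this dimension bookkeeping, namely matching the group in each hypothesis to the correct row of Table~\ref{table:hol} for the cone of dimension $n+1$, since once the isometry property of ${\mathcal I}_{(x,s)}$ is in place there is no substantive analytic obstacle remaining.
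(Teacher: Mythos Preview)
Your proof is correct and follows the same route the paper intends: the corollary is stated there as an immediate consequence of Theorems~\ref{th:holcone} and~\ref{th:hol} together with Table~\ref{table:hol}, with no further argument given. Your extra verification that ${\mathcal I}_{(x,s)}$ is a linear isometry (so that the holonomy containments transfer under conjugation) is a welcome point of rigor that the paper leaves implicit.
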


From Theorem~\ref{th:hol} it follows that the list in Theorem~\ref{th:MS} can be reduced using the classical Berger list of holonomy groups and dimension arguments. More precisely,

\begin{coro}\label{cor:posshol}
Let $(M,g)$ be a connected, simply connected, oriented Riemannian manifold of dimension $n=2m+1$ rolling on $S^{2m+1}$. The only possibilities of $H^{\nabla^{\rm R}}$ are the following
\begin{enumerate}
\item ${\rm SO}(n+1)$,

\item ${\rm U}(m+1)$,

\item ${\rm SU}(m+1)$,

\item ${\rm Sp}(k+1)$, if $n=4k+3$,

\item ${\rm Sp}(k+1)\cdot{\rm Sp}(1)$, if $n=4k+3$,


\item ${\rm Spin}(7)$, $n=7$.
\end{enumerate}
\end{coro}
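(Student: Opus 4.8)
The plan is to intersect the Montgomery--Samelson list of subgroups of ${\rm SO}(n+1)$ acting transitively on $S^{n}$ (Theorem~\ref{th:MS}) with the list of Riemannian holonomy groups available to the cone $C(M)$, the two being linked by the isomorphism $H^{\nabla^{\rm R}}\cong H^{\tilde\nabla}$ of Theorem~\ref{th:hol}.

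First I would specialize Theorem~\ref{th:MS} to the ambient sphere $S^{n}=S^{2m+1}$. Substituting $n=2m+1$ (and $n=4k+3$ where divisibility by $4$ is required) into the Montgomery--Samelson list produces, as candidate transitive subgroups of ${\rm SO}(n+1)={\rm SO}(2m+2)$, exactly ${\rm SO}(n+1)$, ${\rm U}(m+1)$, ${\rm SU}(m+1)$, and---when $n=4k+3$---the three groups ${\rm Sp}(k+1)$, ${\rm Sp}(k+1)\cdot{\rm Sp}(1)$, ${\rm Sp}(k+1)\cdot{\rm U}(1)$, together with ${\rm Spin}(7)$ for $n=7$ and ${\rm Spin}(9)$ for $n=15$. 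The case $G_2$ is excluded at once by parity, since it requires $n=6$ while $n$ is odd. To invoke Theorem~\ref{th:MS} I should also record that $H^{\nabla^{\rm R}}$ is a connected compact subgroup of ${\rm SO}(n+1)$: connectedness follows from $M$ being simply connected (so the holonomy coincides with the restricted holonomy), and via Theorem~\ref{th:hol} it equals the holonomy of the simply connected cone $C(M)$, a closed---hence compact---subgroup of ${\rm SO}(n+1)$.

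The second half of the argument lives on the cone. By Theorem~\ref{th:hol}, $H^{\nabla^{\rm R}}\cong H^{\tilde\nabla}$ is the Riemannian holonomy of $(C(M),\tilde g)$, a manifold of even dimension $2m+2$. Since the group acts transitively on $S^{n}\subset{\mathbb R}^{n+1}$, its holonomy representation on $T_{(x,s)}C(M)\cong{\mathbb R}^{2m+2}$ admits no proper invariant subspace and is therefore irreducible; in particular the cone is not flat, as the trivial group cannot act transitively on a positive-dimensional sphere. I would then use the fact that a non-flat Riemannian cone is never locally symmetric. Indeed, the Euler field $E=s\partial_s$ satisfies $\tilde\nabla E=\mathrm{Id}$ by Lemma~\ref{lem:covder}, whence $\tilde R(\cdot,\cdot)E=0$; comparing $\tilde\nabla_E\tilde R$ with the scale invariance $\mathcal L_E\tilde R=0$ of the $(1,3)$--curvature tensor yields a relation of the form $\tilde\nabla_E\tilde R=-3\tilde R$, so local symmetry $\tilde\nabla\tilde R=0$ would force $\tilde R=0$, contradicting non-flatness. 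Irreducibility together with the absence of local symmetry lets Berger's classification apply, restricting $H^{\tilde\nabla}$ to the Berger list in dimension $2m+2$: namely ${\rm SO}(2m+2)$, ${\rm U}(m+1)$, ${\rm SU}(m+1)$, and, when $4\mid 2m+2$ (equivalently $n=4k+3$), also ${\rm Sp}(k+1)$ and ${\rm Sp}(k+1)\cdot{\rm Sp}(1)$, together with ${\rm Spin}(7)$ in the single dimension $2m+2=8$.

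Finally I would compare the two lists. Every Montgomery--Samelson candidate except ${\rm Sp}(k+1)\cdot{\rm U}(1)$ and ${\rm Spin}(9)$ occurs on Berger's list in dimension $2m+2$, with ${\rm Sp}(k+1)$ and ${\rm Sp}(k+1)\cdot{\rm Sp}(1)$ matching the hyper-K\"ahler and quaternionic K\"ahler entries when $2m+2=4(k+1)$, and ${\rm Spin}(7)$ matching the $8$-dimensional entry when $n=7$. Since neither ${\rm Sp}(k+1)\cdot{\rm U}(1)$ nor ${\rm Spin}(9)$ appears on Berger's list, they cannot be the holonomy of the irreducible, non-symmetric cone and are discarded, leaving precisely the stated six families. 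The main obstacle is the elimination of ${\rm Spin}(9)$: it is a bona fide Riemannian holonomy group, but only as the isotropy representation of the Cayley projective plane $\mathbb{OP}^2$, a symmetric space---so ruling it out depends essentially on the lemma that a non-flat cone is not locally symmetric, which is exactly where the cone structure (through Theorem~\ref{th:hol} and Lemma~\ref{lem:covder}) carries the argument.
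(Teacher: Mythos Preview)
Your proof is correct and follows the same strategy as the paper: identify $H^{\nabla^{\rm R}}$ with the cone holonomy via Theorem~\ref{th:hol}, then discard ${\rm Sp}(k+1)\cdot{\rm U}(1)$ and ${\rm Spin}(9)$ by appeal to Berger's list and $G_2$ by parity. You supply considerably more detail than the paper---in particular the irreducibility of the holonomy representation and the non-local-symmetry of a non-flat cone, which are what actually license the use of Berger's classification and the exclusion of ${\rm Spin}(9)$---but the underlying argument is the same.
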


\begin{proof}
From Theorem~\ref{th:hol}, we know that $H^{\nabla^{\rm R}}\cong H^{\tilde\nabla}$. The groups ${\rm Sp}(k)\cdot{\rm U}(1)$ and ${\rm Spin}(9)$, from the Montgomery-Samelson list do not appear in Berger's list of possible holonomy groups, and therefore can be removed. The group $G_2$ is removed by dimensionality reasons.
\end{proof}

In case (1), the rolling system is completely controllable. It follows from Table~\ref{table:hol} and Theorem~\ref{th:holcone} that in cases (2), (3) and (4) the Riemannian manifold $(M,g)$ is Sasakian, Sasaki-Einstein and 3-Sasakian, respectively. The cases (5) and (6) do not seem to have well-known names.

\section{Unitary rolling holonomy}\label{sec:unitary}

Let us start with the first non-trivial case in Corollary~\ref{cor:posshol}, that is when $H^{\nabla^{\rm R}}\subset {\rm U}(m+1)$. As was mentioned in Corollary~\ref{cor:classif} in this case the manifold $M$ is Sasakian. We want to show how the Sasakian structure on $M$ is inherited by rolling over the Sasakian unit sphere $S^{2m+1}$ by making use of the rolling connection instead of the K\"ahler structure of the Riemannian cone $C(M)$. 

Before stating and proving the main theorem in this section, let us recall equivalent definitions of Sasakian manifolds. See, for instance~\cite{BG1,Sas,S}. 

\begin{prop}\label{equvSas}
Let $(M,g)$ be a Riemannian manifold, with $\nabla$ the Levi-Civita connection of $g$ and $R(X,Y)$ the Riemannian curvature tensor. Then the following are equivalent:
\begin{itemize}
\item[a)]{There exists a Killing vector field $Z$ of unit length such that the tensor field $JX=\nabla_XZ$ satisfies
\begin{equation}
(\nabla_XJ)Y=g(Z,Y)X-g(X,Z)Y, \quad\text{for any}\quad X,Y\in {\rm VF}(M).
\end{equation}
}
\item[b)]{There exists a Killing vector field $Z$ of unit length such that the Riemannian curvature satisfies
\begin{equation*}
R(X,Z)Y=g(Z,Y)X-g(X,Z)Y, \quad\text{for any}\quad X,Y\in {\rm VF}(M).
\end{equation*}
}
\item[c)]{The Riemannian cone $C(M)=(M\times \mathbb R_+,\tilde g)$ is K\"ahler, i.e., $(M,g)$ is a Sasakian manifold.}
\end{itemize}
\end{prop}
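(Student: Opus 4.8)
The goal is to prove the equivalence of three characterizations of Sasakian manifolds: (a) a Killing field $Z$ of unit length whose associated endomorphism $JX = \nabla_X Z$ satisfies a prescribed covariant-derivative identity; (b) the same Killing field satisfying a curvature identity $R(X,Z)Y = g(Z,Y)X - g(X,Z)Y$; and (c) the cone being K\"ahler. The natural strategy is to prove the cyclic chain of implications $(a) \Rightarrow (b) \Rightarrow (c) \Rightarrow (a)$, though in practice a convenient route is to establish $(a) \Leftrightarrow (b)$ directly and then link either one to $(c)$ via the cone computation that Lemma~\ref{lem:covder} has already set up for us.

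\medskip

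The plan is to begin with $(a) \Rightarrow (b)$, which should be a short tensorial computation. Since $Z$ is Killing, the endomorphism $JX = \nabla_X Z$ is skew-symmetric with respect to $g$, and there is a classical identity relating the second covariant derivative of a Killing field to curvature, namely $\nabla_X \nabla_Y Z - \nabla_{\nabla_X Y} Z = R(X,Z)Y$ (equivalently $(\nabla_X J)Y = R(X,Z)Y$). First I would verify this standard Killing-field curvature identity, and then substituting the hypothesis of (a), namely $(\nabla_X J)Y = g(Z,Y)X - g(X,Z)Y$, immediately yields (b). For the converse $(b) \Rightarrow (a)$, I would run the same identity backwards: the Killing identity gives $(\nabla_X J)Y = R(X,Z)Y$ unconditionally, so (b) forces the right-hand side of (a). Thus $(a) \Leftrightarrow (b)$ reduces entirely to the Killing-field second-derivative formula, which I expect to be routine.

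\medskip

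For the link to $(c)$, I would use the cone explicitly. Using Lemma~\ref{lem:covder}, the candidate complex structure $I$ on $C(M) = M \times \mathbb{R}_+$ is naturally built from $J$ and $Z$: on the horizontal part $I$ acts essentially by $J$ together with the Reeb/radial exchange sending the radial field $\partial_s$ to $Z/s$ (up to the normalization dictated by $\tilde g = s^2 g + ds^2$), and $Z \mapsto -s\,\partial_s$. The assertion that $C(M)$ is K\"ahler is equivalent to $\tilde\nabla I = 0$, and computing $\tilde\nabla I$ with the explicit connection formulas in Lemma~\ref{lem:covder} reduces term-by-term to the two conditions (unit-length Killing $Z$, plus the identity in (a)). So I would show $(a) \Rightarrow (c)$ by direct substitution into $\tilde\nabla I = 0$, and $(c) \Rightarrow (a)$ by reading off the vanishing of $\tilde\nabla I$ on the appropriate vector pairs, recovering that $Z$ is Killing of unit length and that $J$ satisfies the (a)-identity. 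Here I can also simply invoke Theorem~\ref{th:holcone}, which already records that $(M,g)$ is Sasakian iff $C(M)$ is K\"ahler, to identify (c) with the Sasakian condition and thereby close the loop without re-deriving the full correspondence.

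\medskip

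The main obstacle I anticipate is the bookkeeping in the cone computation: the warping factor $s^2$ produces the $\tfrac1s$ terms in Lemma~\ref{lem:covder}, and matching $\tilde\nabla I = 0$ against the intrinsic identity on $M$ requires careful tracking of how $I$ intertwines the horizontal distribution, the radial direction $\partial_s$, and the Reeb field $Z$. The skew-symmetry of $J$ and the algebraic identity $\Phi^2 = -\mathrm{id} + \xi \otimes \eta$ are implicitly needed to confirm that $I^2 = -\mathrm{id}$, so I would verify $J^2 X = -X + g(X,Z)Z$ as a preliminary consequence of (a) (or of the contact-metric compatibility) before asserting $I$ is an almost complex structure. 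Everything else is tensor algebra that I expect to go through without genuine difficulty once the normalizations are fixed.
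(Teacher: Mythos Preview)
The paper does not actually prove Proposition~\ref{equvSas}. It is introduced with the sentence ``let us recall equivalent definitions of Sasakian manifolds. See, for instance~\cite{BG1,Sas,S}'' and is then stated without proof as a known result from the literature. So there is no argument in the paper to compare your proposal against.

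That said, your outline is correct and is precisely the standard route taken in the references the paper cites. The equivalence $(a)\Leftrightarrow(b)$ does reduce to the Killing-field second-derivative identity $(\nabla_X J)Y=\nabla_X\nabla_YZ-\nabla_{\nabla_XY}Z=R(X,Z)Y$, and the link to $(c)$ is exactly the cone computation you describe: define $I$ on $C(M)$ by $I(\partial_s)=\tfrac1s Z$, $I(Z)=-s\,\partial_s$, and $I=J$ on $Z^\perp\subset TM$, then use the formulas of Lemma~\ref{lem:covder} to see that $\tilde\nabla I=0$ unpacks into the unit Killing condition together with the identity in $(a)$. Your anticipated preliminary check $J^2X=-X+g(X,Z)Z$ is indeed needed to verify $I^2=-\mathrm{id}$, and it follows directly from $(a)$ by evaluating $(\nabla_XJ)Z$ and using $JZ=\nabla_ZZ=0$ and the skew-symmetry of $J$. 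Nothing in your plan is wrong; it simply supplies what the paper chose to cite rather than prove.
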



\subsection{Unitary holonomy implies Sasakian} 

With all of these at hand, we have the following result.

\begin{theorem}\label{th:cont}
Let $(M,g)$ be a Riemannian manifold of dimension $2m+1$. Suppose that $H^{\nabla^{\rm R}}\subset {\rm U}(m+1)$. Then there exists a Killing vector field $Z$ of unit length and a $(1, 1)$ tensor field $J$ satisfying
\begin{equation}\label{Jcond}
JX=\nabla_XZ,\quad (\nabla_XJ)Y=g(Z,Y)X-g(X,Z)Y, 
\end{equation}
for any vectors $X,Y$ tangent to $M$.
Moreover, the one form $\alpha=g(Z,\cdot)$ is contact with Reeb vector field $Z$, the distribution 
$D=\ker\alpha$ is the contact distribution and $J|_D$ is a compatible almost complex structure on $D$. Thus $(Z,\alpha,J,g)$ is a Sasakian structure on $M$.
\end{theorem}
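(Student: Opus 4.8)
The plan is to exploit the holonomy reduction $H^{\nabla^{\rm R}}\subset{\rm U}(m+1)$ to produce a $\nabla^{\rm R}$-parallel complex structure on the vector bundle $TM\oplus\mathbb{R}$, and then to \emph{unwind} its parallelism against the explicit formula of Definition~\ref{def:conn} so as to recover the tensors $Z$ and $J$ together with the identities \eqref{Jcond}. First I would invoke the fundamental holonomy principle: since ${\rm U}(m+1)$ is precisely the stabilizer in ${\rm SO}(2m+2)$ of the standard complex structure on $\mathbb{R}^{2m+2}=\mathbb{C}^{m+1}$, the inclusion $H^{\nabla^{\rm R}}\subset{\rm U}(m+1)$ says that the rolling holonomy fixes a complex structure on the fiber over a base point. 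As $M$ is simply connected, parallel transport of this endomorphism is path-independent and defines a global, $\nabla^{\rm R}$-parallel field of endomorphisms $\mathbf{J}$ of $TM\oplus\mathbb{R}$ with $\mathbf{J}^2=-{\rm Id}$ that is compatible with the fiber metric $h_1$ of \eqref{eq:hc}; in particular $\mathbf{J}$ is $h_1$-skew-adjoint.

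Next I would extract the geometric data. Writing $\mathbf{J}(0,1)=(Z,f)$, skew-adjointness gives $f=h_1(\mathbf{J}(0,1),(0,1))=0$, while $h_1$-compatibility gives $g(Z,Z)=h_1((0,1),(0,1))=1$, so $Z$ is a unit vector field. From Definition~\ref{def:conn} one computes directly $\nabla^{\rm R}_Y(0,1)=(Y,0)$ and $\nabla^{\rm R}_Y(Z,0)=(\nabla_YZ,-g(Z,Y))$, so the parallelism $\nabla^{\rm R}_Y(\mathbf{J}(0,1))=\mathbf{J}(\nabla^{\rm R}_Y(0,1))$ forces
\[
\mathbf{J}(Y,0)=(JY,-g(Z,Y)),\qquad JY:=\nabla_YZ,
\]
which is the first equation in \eqref{Jcond}. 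Reading the $h_1$-skew-adjointness of $\mathbf{J}$ off this formula yields $g(JX,Y)=-g(X,JY)$, i.e. $J$ is $g$-skew-symmetric, which is exactly the Killing equation $g(\nabla_XZ,Y)+g(\nabla_YZ,X)=0$ for $Z$.

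Finally, I would differentiate the relation $\mathbf{J}(Y,0)=(JY,-g(Z,Y))$ once more: applying $\nabla^{\rm R}_X$ to both sides, using $\nabla^{\rm R}\mathbf{J}=0$ and Definition~\ref{def:conn}, and comparing $TM$-components gives the second identity in \eqref{Jcond}. With $(Z,J)$ and \eqref{Jcond} established, part a) of Proposition~\ref{equvSas} immediately shows that $(M,g)$ is Sasakian. The remaining almost-contact-metric assertions are algebraic consequences of $\mathbf{J}^2=-{\rm Id}$ and $h_1$-compatibility: evaluating $\mathbf{J}^2(Y,0)=-(Y,0)$ gives $J^2=-{\rm Id}+Z\otimes\alpha$ and $JZ=0$, the compatibility $h_1(\mathbf{J}\,\cdot,\mathbf{J}\,\cdot)=h_1(\cdot,\cdot)$ restricts to $g(JX,JY)=g(X,Y)-\alpha(X)\alpha(Y)$, and since $d\alpha(X,Y)=2g(JX,Y)$ by metricity and skew-symmetry of $J$, the condition $d\alpha(Z,\cdot)=0$ follows from $JZ=0$. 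Hence $(Z,\alpha,J,g)$ is a Sasakian structure.

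The step I expect to be the main obstacle is the bookkeeping in this last differentiation: the rolling connection $\nabla^{\rm R}$ is not the product connection, but mixes the $TM$ and $\mathbb{R}$ summands through the cross-terms $rY$ and $-g(X,Y)$ of Definition~\ref{def:conn}, so one must track these carefully for the covariant-derivative identity to emerge with the correct coefficients. By contrast, the passage from holonomy containment to a global parallel $\mathbf{J}$ is standard once simple connectivity is used, and the closing verifications are fiberwise linear algebra.
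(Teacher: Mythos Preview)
Your proposal is correct and follows essentially the same route as the paper: produce a $\nabla^{\rm R}$-parallel, $h_1$-orthogonal complex structure on $TM\oplus\mathbb{R}$ from the holonomy reduction, extract $Z$ and $J$ from it, and read off the identities \eqref{Jcond} by unwinding the parallelism against the explicit formula for $\nabla^{\rm R}$. The only cosmetic difference is ordering: the paper first defines $J$ and an auxiliary one-form $\beta$ directly from $J^{\rm R}(X,0)$ and then derives $JY=\nabla_YZ$ among four scalar equations, whereas you obtain $\mathbf{J}(Y,0)=(\nabla_YZ,-g(Z,Y))$ by applying parallelism to $(0,1)$ first and then differentiate once more; the paper also spells out the contact-metric verifications in full while you invoke Proposition~\ref{equvSas} and the fiberwise algebra of $\mathbf{J}$ more directly.
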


%

\begin{proof}
The scheme of the proof is as follows. Using the complex structure of $TM\oplus{\mathbb R}$ determined by the action of the group ${\rm U}(m+1)$, we define the $(1,1)$ tensor field $J$ and the vector field $Z$, and show that they satisfy conditions~\eqref{Jcond}. With these, it is possible to prove the rest of the theorem. For notational simplicity, in this proof we denote by $(X,r)$ the vector $X+r\partial_s|_{(x,s)}\in T_{(x,s)}C(M)$.

Let us fix an arbitrary point $x_0\in M$. Since $H^{\nabla^{\rm R}}|_{x_0}$ is a subgroup of the unitary group ${\rm U}(T_{x_0}M\oplus{\mathbb R})$, with respect to the metric $h=h_1$ defined in equation~\eqref{eq:hc}, it follows that there exists an $H^{\nabla^{\rm R}}|_{x_0}$-invariant almost complex structure 
\[
J_0^{\rm R}\colon T_{x_0}M\oplus{\mathbb R}\to T_{x_0}M\oplus{\mathbb R}.
\]
Specifically, the map $J_0^{\rm R}$ satisfies 
\begin{align*}
\|J_0^{\rm R}(X,r)\|_h&=\|(X,r)\|_h,&&\mbox{ for all }(X,r)\in T_{x_0}M\oplus{\mathbb R},\\
(J_0^{\rm R})^2&=-{\rm id},&&\\
BJ_0^{\rm R}&=J_0^{\rm R}B,&&\mbox{ for all }B\in H^{\nabla^{\rm R}}|_{x_0}\subset {\rm U}(T_{x_0}M\oplus{\mathbb R}).
\end{align*}
The parallel transport $\big(P^{\nabla^{\rm R}}\big)_0^1(\gamma)J_0^{\rm R}$ is independent of the curve $\gamma$ joining $x_0$ to $x$, and therefore the map
\[
J^{\rm R}_x:=\big(P^{\nabla^{\rm R}}\big)_0^1(\gamma)J_0^{\rm R},\quad\gamma(0)=x_0,\gamma(1)=x
\]
is a well-defined $(1,1)$-tensor field on $TM\oplus{\mathbb R}\to M$ satisfying
\begin{align*}
\|J^{\rm R}(X,r)\|_h&=\|(X,r)\|_h,\quad\mbox{ for all $(X,r)$ sections of $TM\oplus{\mathbb R}$,}\\
(J^{\rm R})^2&=-{\rm id},\\
\nabla^{\rm R}J^{\rm R}&=0.
\end{align*}
Let us define a $(1,1)$-tensor field $J$ on $M$, a one form $\beta\in\Omega^1(M)$ and a vector field $Z\in{\rm VF}(M)$ as follows
\begin{align*}
(J_xX,\beta_x(X))&:=J_x^{\rm R}(X,0),\quad X\in T_xM,\\
(Z_x,0)&:=J_x^{\rm R}(0,1),
\end{align*}
for all $x\in M$. To see that $J_x^{\rm R}(0,1)$ has second component identically zero, we simply notice that
\[
h(J_x^{\rm R}(0,1),(0,1))=0,
\]
for all $x\in M$. Let us check the properties of the $(1,1)$ tensor $J$.
Let $X,Y$ be sections of $D$. Then
\begin{equation}\label{eq:isom}
\begin{split}
g(JX,JY)=h\big((JX,0),(JY,0)\big)&=h\big(J^{\rm R}(X,0),J^{\rm R}(Y,0)\big)\\
&=h\big((X,0),(Y,0)\big)=g(X,Y).
\end{split}
\end{equation}
We also see that
\begin{equation}\label{eq:J2}
(-X,0)=(J^{\rm R})^2(X,0)=J^{\rm R}(JX,0)=(J^2X,0).
\end{equation}
An important fact to have in mind is that $JZ=0$, as can be seen directly
\begin{equation}\label{eq:JZ}
(0,-1)=(J^{\rm R})^2(0,1)=J^{\rm R}(Z,0)=(JZ,\beta(Z)).
\end{equation}

Let $X=X_D+aZ$ and $Y=Y_D+bZ$, where $X_D,Y_D$ are tangent vectors to $D$, and $a,b\in{\mathbb R}$. Combining the last two results we obtain
\begin{align}\label{eq:skewsym}
g(X,JY)&=g(X_D+aZ,J(Y_D+bZ))=g(X_D,J(Y_D))\\
&=g(JX_D,J^2Y_D)=-g(JX_D,Y_D)\nonumber\\
&=-g(J(X_D+aZ),Y_D+bZ)=-g(JX,Y).\nonumber
\end{align}

The next step is to show the property~\eqref{Jcond}.
Recall that the covariant derivative of a $(1,1)$-tensor $T$ is given by
\[
(\nabla_YT)(X)=\nabla_Y(T(X))-T(\nabla_YX),
\]
thus we can conclude that
\[
0=(\nabla_Y^{\rm R}J^{\rm R})(X,r)=\nabla^{\rm R}_Y(J^{\rm R}(X,r))-J^{\rm R}\nabla^{\rm R}_Y(X,r).
\]
Using the formula for $\nabla^{\rm R}$ from Definition~\ref{def:conn} and writing 
\[
J^{\rm R}(X,r)=J^{\rm R}(X,0)+rJ^{\rm R}(0,1)=(JX+rZ,\beta(X)).
\]
for $J^{\rm R}$, we see that
\begin{multline*}
J^{\rm R}\nabla^{\rm R}_Y(X,r)=J^R(\nabla_YX+rY,Y(r)-g(X,Y))\\
=(J(\nabla_YX+rY)+(Y(r)-g(X,Y))Z,\beta(\nabla_YX+rY)).
\end{multline*}
and similarly, we obtain
\begin{align*}
\nabla^{\rm R}_Y(J^{\rm R}(X,r))&=&&(\nabla_Y(JX+rZ)+\beta(X)Y,Y(\beta(X))-g(Y,JX+rZ))\\
&=&&((\nabla_YJ)(X)+J\nabla_YX+Y(r)Z+r\nabla_YZ+\beta(X)Y,\\
&&&Y(\beta(X))-g(Y,JX+rZ)).
\end{align*}

We obtain the equalities
\begin{align*}
rJY-g(X,Y)Z&=(\nabla_YJ)(X)+r\nabla_YZ+\beta(X)Y,\\
\beta(\nabla_YX)+r\beta(Y)&=Y(\beta(X))-g(Y,JX)-rg(Y,Z),
\end{align*}
valid for all $X,Y\in T_xM$ and all $r\in{\mathbb R}$. It follows that
\begin{align}
JY&=\nabla_YZ,\label{eq:JY}\\
-g(X,Y)Z&=(\nabla_YJ)(X)+\beta(X)Y,\label{eq:gJbeta}\\
\beta(\nabla_YX)&=Y(\beta(X))-g(Y,JX),\\
\beta(Y)&=-g(Y,Z)\label{eq:betag}.
\end{align}
Equation~\eqref{eq:JY} corresponds to the first part of~\eqref{Jcond}. Combining equations~\eqref{eq:gJbeta} and~\eqref{eq:betag} we obtain
\begin{equation*}
(\nabla_YJ)(X)=-g(X,Y)Z+g(X,Z)Y,
\end{equation*}
which corresponds to the second part of~\eqref{Jcond}. From the formulas above it follows easily that $Z$ is a Killing vector field, since
\[
g(\nabla_XZ,Y)+g(X,\nabla_YZ)=g(JX,Y)+g(X,JY)=0
\]
by~\eqref{eq:skewsym}. 

To finish the first part of the proof we need only to show that $Z$ has the length 1. Define the one form $\alpha(\cdot)=g(Z,\cdot)$, which obviously satisfies $\alpha=-\beta$ from equation~\eqref{eq:betag}. The property $\alpha(Z)=1$ from~\eqref{eq:JZ} and~\eqref{eq:betag} shows that $Z$ is a vector field of unit length. 

Recall that $D=\ker\alpha$ from the statement of the theorem. We proceed to prove that $J|_D$ is an endomorphism whose square equals $-{\rm id}|_D$, and that is an isometry with respect to $g|_D$. Let $X$ be a section of $D$, then
\[
J^{\rm R}(X,0)=(JX,-\alpha(X))=(JX,0),
\]
thus, since $J^{\rm R}$ is an isometry of $h$,
\begin{align*}
\alpha(JX)=g(Z,JX)&=h\big((Z,0),(JX,0)\big)\\
&=h\big(J^{\rm R}(0,1),J^{\rm R}(X,0)\big)=h\big((0,1),(X,0)\big)=0.
\end{align*}
It follows that $J|_D$ is an endomorphism. The property $(J|_D)^2=-{\rm id}$ and $
g(JX,JY)=g(X,Y)$ follows from~\eqref{eq:isom} and~\eqref{eq:J2}.

To conclude the proof of Theorem~\ref{th:cont}, we need to show that $(M,\alpha)$ is a contact manifold with Reeb vector field $Z$. Since we already showed that $\alpha(Z)=1$ in~\eqref{eq:JZ}, it suffice to show that $d\alpha(Z,\cdot)=0$. 
Define the map
\[
\omega(X,Y)=g(JX,Y),\quad X,Y\in T_xM.
\]
We want to show that $\omega$ is a two form satisfying ${\rm d}\alpha=2\omega$, and that $\omega|_D$ is non-degenerate. Let $X,Y\in T_xM$ and set $X_D=X-\alpha(X)Z, Y_D=Y-\alpha(Y)Z\in D_x$, then
\begin{align}\label{eq:2form}
\omega(X,Y)+\omega(Y,X)&=g(JX,Y)+g(JY,X)\nonumber\\
&=g(JX_D,Y_D+\alpha(Y)Z)+g(JY_D,X_D+\alpha(X)Z)\nonumber\\
&=g(JX_D,Y_D)+g(JY_D,X_D)=0,
\end{align}
which follows from the facts that $JZ=0$, the map $J|_D$ is an almost complex structure and an isometry with respect to $g|_D$. We conclude that $\omega$ is a two form. From Cartan's formula and properties of the Levi-Civita connection, we see that for any $X,Y\in{\rm VF}(M)$
\begin{align*}
{\rm d}\alpha(X,Y)&=X(\alpha(Y))-Y(\alpha(X))-\alpha([X,Y])\\
&=X(g(Z,Y))-Y(g(Z,X))-g(Z,[X,Y])\\
&=g(\nabla_XZ,Y)+g(Z,\nabla_XY)-g(\nabla_YZ,X)\\
&-g(Z,\nabla_YX)-g(Z,[X,Y])\\
&=g(JX,Y)-g(JY,X)+g(Z,\nabla_XY-\nabla_YX-[X,Y])\\
&=2\omega(X,Y).
\end{align*}
To see that $\omega$ is non-degenerate, pick a local orthonormal basis of $D$
\[
X_1,Y_1,X_2,Y_2,\dotsc,X_m,Y_m,\quad 2m+1=n
\]
such that $JX_i=Y_i$, $i\in\{1,\dotsc,m\}$. Then
\[
\omega(X_i,Y_j)=\delta_{i,j},\quad\omega(X_i,X_j)=\omega(Y_i,Y_j)=0,\quad i,j\in\{1,\dotsc,m\}.
\]
The non-degeneracy follows, and thus ${\rm d}\alpha$ is a symplectic form on $D$. It is well-know that this is equivalent to $\alpha$ being a contact form.

Finally, we note that
\[
{\rm d}\alpha(Z,\cdot)=2\omega(Z,\cdot)=2g(JZ,\cdot)=0,
\]
and since $\alpha(Z)=g(Z,Z)=1$, we conclude that $Z$ is the Reeb vector field of $\alpha$.
\end{proof}

\subsection{A converse result}

\begin{theorem}\label{th:convcont}
Let $(M,g)$ be a Riemannian manifold of dimension $2m + 1$. Suppose there exist a Killing vector field $Z\in{\rm VF}(M)$ of unit length and a $(1,1)$-tensor field $J$ satisfying the conditions in equation~\eqref{Jcond}, $JZ = 0$, and $J|_{ D}\colon{ D}\to{ D}$ an isometric almost complex structure on ${ D} = Z^\bot$. Then $H^{\nabla^{\rm R}} \subset {\rm U}(m+1)$.
\end{theorem}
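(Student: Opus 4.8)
The plan is to reverse the construction carried out in the proof of Theorem~\ref{th:cont}. There, a parallel $h$-orthogonal complex structure $J^{\rm R}$ on $TM\oplus\mathbb{R}$ was used to \emph{extract} the Sasakian data $(Z,J)$; here I start from the hypothesized data and \emph{build} such a $J^{\rm R}$ by hand, then read off the holonomy reduction. Writing $\alpha=g(Z,\cdot)$ and $h=h_1$ as in~\eqref{eq:hc}, I define the bundle endomorphism
\[
J^{\rm R}(X,r)=\big(JX+rZ,\,-\alpha(X)\big),\qquad (X,r)\in T_xM\oplus\mathbb{R}.
\]
This is the pointwise algebraic suspension of $(J,Z,\alpha)$, hence a globally well-defined $(1,1)$-tensor on the vector bundle $TM\oplus\mathbb{R}\to M$, with $J^{\rm R}(0,1)=(Z,0)$.

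First I would verify the two purely algebraic properties of $J^{\rm R}$. Using $JZ=0$ together with the identity $J^2=-\mathrm{id}+\alpha\otimes Z$ (which follows from the hypothesis that $J|_{D}$ is an almost complex structure and $JZ=0$), a direct computation gives $(J^{\rm R})^2=-\mathrm{id}$. Compatibility with the metric, $h(J^{\rm R}\cdot,J^{\rm R}\cdot)=h(\cdot,\cdot)$, follows from the skew-symmetry of $J$ and the isometry property of $J|_{D}$, exactly as in~\eqref{eq:isom} and~\eqref{eq:J2}. These steps are routine and use only $|Z|=1$, $JZ=0$, and that $J|_{D}$ is an isometric almost complex structure.

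The heart of the proof is to show that $\nabla^{\rm R}J^{\rm R}=0$. Expanding $(\nabla^{\rm R}_YJ^{\rm R})(X,r)=\nabla^{\rm R}_Y(J^{\rm R}(X,r))-J^{\rm R}(\nabla^{\rm R}_Y(X,r))$ with the formula for $\nabla^{\rm R}$ from Definition~\ref{def:conn} and comparing components reduces precisely to the two relations in~\eqref{Jcond}, namely $\nabla_YZ=JY$ and the $\nabla J$-identity, together with $\alpha=g(Z,\cdot)$. This is the computation appearing in the proof of Theorem~\ref{th:cont} read in reverse: there~\eqref{Jcond} was derived from $\nabla^{\rm R}J^{\rm R}=0$, and here I impose~\eqref{Jcond} to recover it. I expect this term-by-term matching to be the main point requiring care, since it is exactly where the precise form of both conditions in~\eqref{Jcond} enters.

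Finally, since $J^{\rm R}$ is $\nabla^{\rm R}$-parallel and $h$-compatible, it is invariant under parallel transport, so every holonomy element $\big(P^{\nabla^{\rm R}}\big)_0^1(\gamma)$ based at a point $x_0$ commutes with $J^{\rm R}_{x_0}$; being metric for $h$, these elements are moreover $h$-orthogonal. Thus $H^{\nabla^{\rm R}}|_{x_0}$ preserves both the inner product $h$ and the compatible complex structure $J^{\rm R}_{x_0}$ on the $(2m+2)$-dimensional space $T_{x_0}M\oplus\mathbb{R}$, which is precisely the assertion $H^{\nabla^{\rm R}}\subset\mathrm{U}(m+1)$. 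As a conceptual cross-check, the same conclusion follows from Proposition~\ref{equvSas} (the hypotheses make $(M,g)$ Sasakian) and Theorem~\ref{th:holcone} (so $C(M)$ is K\"ahler and $H^{\tilde\nabla}\subset\mathrm{U}(m+1)$), transported through the isometric identification $\mathcal{I}$ of Theorem~\ref{th:hol}, under which the cone's parallel complex structure corresponds to the $J^{\rm R}$ constructed above.
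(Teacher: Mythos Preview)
Your proposal is correct and follows essentially the same approach as the paper: you define the same endomorphism $J^{\rm R}(X,r)=(JX+rZ,-g(X,Z))$, verify it is an $h$-orthogonal almost complex structure, show $\nabla^{\rm R}J^{\rm R}=0$ by the same component computation using~\eqref{Jcond}, and conclude that holonomy commutes with $J^{\rm R}$ and hence lies in ${\rm U}(m+1)$. The cross-check via Proposition~\ref{equvSas} and Theorems~\ref{th:holcone},~\ref{th:hol} is a nice conceptual addition not present in the paper's proof.
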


\begin{proof}
Define a $(1,1)$-tensor field $J^{\rm R}$ on the vector bundle $TM\oplus{\mathbb R}\to M$ by
\begin{equation*}
J^{\rm R}(X, r) := (JX + rZ, -g(X, Z)).
\end{equation*}
We will show that $J^{\rm R}$ is both an isometry with respect to the metric $h=h_1$ in~\eqref{eq:hc}, and an almost complex structure on $TM\oplus{\mathbb R}\to M$ which is parallel with respect to the rolling connection $\nabla^{\rm R}$.

Indeed, if $(X,r)$ is a section of $TM \oplus {\mathbb R}$, we notice that $X_D = X -g(X,Z)Z$ is a section of $D$ satisfying $JX = JX_D$, and hence
\begin{align*}
\|J^{\rm R}(X,r)\|_h^2&=\|(JX + rZ, -g(X, Z))\|_h^2\\
&=\|JX_D\|_g^2+r^2\|Z\|_g^2+g(X,Z)^2\\
&=\|X_D\|_g^2+g(X,Z)^2+r^2=\|X\|_g^2+r^2=\|(X,r)\|_h^2,
\end{align*}
thus $J^{\rm R}$ is an isometry. To see that $J^{\rm R}$ is a complex structure on $TM\oplus{\mathbb R}$, we notice that
\begin{align*}
\big(J^{\rm R}\big)^2(X,r)&=J^{\rm R}(JX + rZ, -g(X, Z))\\
&=(J(JX + rZ)-g(X,Z)Z, -g(JX+rZ, Z))\\
&=(J^2X_D-g(X,Z)Z,-r)=(-X_D-g(X,Z)Z,-r)\\
&=-(X,r).
\end{align*}

To show that $J^{\rm R}$ is a parallel tensor field, we show that $\nabla^{\rm R}_Y(J^{\rm R}(X,r))=J^{\rm R}\nabla^{\rm R}_Y(X,r)$. This equality follows from
\begin{align*}
\nabla^{\rm R}_Y(J^{\rm R}(X,r))&=\nabla^{\rm R}_Y(JX + rZ, -g(X, Z))\\
&=(\nabla_Y (JX +rZ)-g(X,Z)Y,-Y(g(X,Z))-g(Y,JX +rZ))\\
&=((\nabla_Y J)X +J\nabla_Y X +Y(r)Z +r\nabla_Y Z -g(X,Z)Y,\\
&-g(\nabla_Y X,Z)-g(X,\nabla_Y Z)-g(Y,JX)-rg(Y,Z))\\
&=((\nabla_Y J)X +J\nabla_Y X +Y(r)Z +rJY -g(X,Z)Y,\\
&-g(\nabla_Y X,Z)-g(X,JY)-g(Y,JX)-rg(Y,Z))\\
&=(J(\nabla_Y X +rY)+(Y(r)-g(X,Y))Z,-g(\nabla_Y X +rY,Z))\\
&=J^{\rm R}(\nabla_Y X + rY, Y (r) - g(X, Y )) = J^{\rm R}\nabla^{\rm R}_Y (X, r),
\end{align*}
where we used the fact that $g(X,JY)=-g(JX,Y)$, whose proof is the same as in equation~\eqref{eq:skewsym}.

To conclude the proof, let us fix $x\in M$. The group ${\rm U}(m+1)$ can be identified with ${\rm U}(T_xM \oplus{\mathbb R}) := \{B \in {\rm SO}(T_xM\oplus{\mathbb R}) \,|\, [J^{\rm R}|_x,B] = 0\}.$
We know that $H^{\nabla^{\rm R}}|_x \subset {\rm SO}(T_xM\oplus{\mathbb R})$, so it is enough to notice that $\nabla^{\rm R}J^{\rm R} = 0$ means exactly that $[J^{\rm R}|_x,B] = 0$ for all $B\in H^{\nabla^{\rm R}}|_x$. Thus $H^{\nabla^{\rm R}}|_x \subset {\rm U}(T_xM \oplus{\mathbb R})$.
\end{proof}

\subsection{Example: Rolling the Heisenberg group on the sphere}

The $2m+1$ dimensional Heisenberg group ${\mathbf H}^m$ is the Lie group structure on ${\mathbb R}^{2m+1}$, whose Lie algebra is generated by the vector fields
\begin{equation}\label{eq:vfHn}
X_i=\partial_{x_i}-y_i\partial_z,\quad Y_i=\partial_{y_i}+x_i\partial_z,\quad Z=\partial_z,
\end{equation}
where $(x_1,y_1,\dotsc,x_n,y_n,z)$ are the standard coordinates in ${\mathbb R}^{2m+1}$. It is easy to see that $[X_i,Y_i]=2Z$, $i=1,\dotsc,m$, and all other Lie brackets vanish. We set a Riemannian metric $g$ on ${\mathbf H}^m$ for which the vector fields~\eqref{eq:vfHn} are orthonormal. 
From Koszul's formula for covariant derivatives, we can deduce that for $i=1,\dotsc,m$
\begin{align}
\nabla_{X_i}Y_i&=-\nabla_{Y_i}X_i=Z,\label{eq:covderH}\\
\nabla_{X_i}Z&=\nabla_ZX_i=-Y_i,\nonumber\\
\nabla_{Y_i}Z&=\nabla_ZY_i=X_i,\nonumber
\end{align}
and all other covariant derivatives vanish.

Following the ideas from previous sections, define a contact one form $\alpha(V)=g(V,Z)$ and a $(1,1)$-tensor $JV=\nabla_ZV$, for $V\in{\rm VF}({\mathbf H}^m)$. Note that formulas~\eqref{eq:covderH} imply that
\[
JX_i=-Y_i,\quad JY_i=X_i,\quad JZ=0. 
\]
It follows that $(Z,\alpha,J,g)$ is a Sasakian manifold. 

It is important to observe that the Riemannian cone $(C({\mathbf H}^m),\tilde g)$ of ${\mathbf H}^m$ is Ricci-flat if and only if the Ricci tensor of ${\mathbf H}^m$ satisfies the Einstein equation
\[
Ric_g=2mg,
\]
see~\cite[Proposition 1.9]{S}. Recall that, since $(C({\mathbf H}^m),\tilde g)$ is K\"ahler, then Ricci-flatness is equivalent to $H^{\tilde\nabla}\subset{\rm SU}(m+1)$. Since the Ricci tensor for ${\mathbf H}^m$ in the basis $\{X_1,Y_1,\dotsc,X_m,Y_m,Z\}$ is given by
\[
Ric_g=\begin{pmatrix}-2m&0&0&\cdots&0&0\\0&-2m&0&\cdots&0&0\\0&0&-2m&\cdots&0&0\\
\vdots&\vdots&\vdots&\ddots&\vdots&\vdots\\0&0&0&\cdots&-2m&0\\0&0&0&\cdots&0&2m\end{pmatrix},
\]
which is not a constant multiple of the identity matrix, we know that $C({\mathbf H}^m)$ is not Ricci-flat. By Theorem~\ref{th:hol}, it follows that $H^{\nabla^{\rm R}}\cong H^{\tilde\nabla}$ is a subgroup of ${\rm U}(m+1)$ in the list of Theorem~\ref{th:MS} which is not a subgroup of ${\rm SU}(m+1)$. We conclude that $H^{\nabla^{\rm R}}=U(m+1)$.

\subsection{Special unitary rolling holonomy}

It is known, see~\cite{S}, that simply connected Sasaki-Einstein manifolds are spin manifolds. It follows that a simply connected Sasaki-Einstein manifold of dimension three must be isometric to the standard 3-sphere. Thus, in dimension three, the rolling holonomy group ${\rm SU}(2)$ cannot occur. We conclude that for $M$ of dimension three, all non-controllable systems of $M$ rolling on $S^3$ have either trivial or ${\rm U}(2)$ rolling holonomy.

In a similar way, we can deduce a dichotomy as above for all Sasaki-Einstein manifolds of dimension $4m+1$.

\begin{prop}
Let $(M,g)$ be a Sasaki-Einstein manifold of dimension $4m+1$. Then the rolling holonomy group $H^{\nabla^{\rm R}}$ either coincides with ${\rm SU}(2m+1)$ or it is trivial.
\end{prop}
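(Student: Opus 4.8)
The plan is to transfer the question entirely to the Riemannian cone $C(M)$ and then combine the Berger classification with a parity argument that is special to the dimension $4m+1$. First I would record that, since $(M,g)$ is Sasaki-Einstein, Theorem~\ref{th:holcone} gives that $(C(M),\tilde g)$ is Calabi-Yau; here $\dim_{\mathbb R}C(M)=4m+2$, i.e. the complex dimension is the \emph{odd} number $2m+1$, so that $H^{\tilde\nabla}\subset{\rm SU}(2m+1)$ and $C(M)$ is Ricci-flat. By Theorem~\ref{th:hol} the rolling holonomy is isomorphic to this cone holonomy, $H^{\nabla^{\rm R}}\cong H^{\tilde\nabla}$, so it suffices to show that $H^{\tilde\nabla}$ is either trivial or all of ${\rm SU}(2m+1)$. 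Since $C(M)=M\times{\mathbb R}_+$ is homotopy equivalent to the simply connected $M$, it is itself simply connected, hence $H^{\tilde\nabla}$ is connected and restricted and full holonomy coincide.

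Next I would invoke the dichotomy for metric cones: by Gallot's theorem (see~\cite{BG}), the cone over a complete Riemannian manifold is either flat or has irreducible holonomy, completeness of $M$ being part of the standing assumptions. In the flat case $H^{\tilde\nabla}$ is trivial, and hence so is $H^{\nabla^{\rm R}}$, which is one branch of the asserted dichotomy. It remains to treat the irreducible case, where I would apply Berger's classification to the connected, irreducible group $H^{\tilde\nabla}\subset{\rm SU}(2m+1)$. The locally symmetric possibility is excluded first: an irreducible Riemannian locally symmetric space is of compact or noncompact type and thus has nonzero Einstein constant, whereas $C(M)$ is Ricci-flat; an irreducible symmetric cone would therefore be flat, contradicting irreducibility. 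Hence $H^{\tilde\nabla}$ must appear on Berger's list (see~\cite{B,BG}).

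The heart of the argument is then checking that the only Berger holonomy sitting inside ${\rm SU}(2m+1)$ in this dimension is ${\rm SU}(2m+1)$ itself, and this is exactly where the hypothesis $\dim M=4m+1$ is used. The crucial observation is a parity one: the cone has odd complex dimension $2m+1$. A hyper-K\"ahler holonomy ${\rm Sp}(l)$ would act on ${\mathbb C}^{2l}$, forcing $2l=2m+1$, impossible since $2l$ is even; the same parity rules out a quaternionic-K\"ahler factor ${\rm Sp}(l)\cdot{\rm Sp}(1)$ (which in any case is neither contained in ${\rm SU}$ nor Ricci-flat). The exceptional holonomies $G_2$ and ${\rm Spin}(7)$ are excluded because their representation dimensions $7$ and $8$ cannot equal $4m+2$, while ${\rm SO}(4m+2)$ and ${\rm U}(2m+1)$ are excluded since they are not contained in ${\rm SU}(2m+1)$. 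Thus $H^{\tilde\nabla}={\rm SU}(2m+1)$, and therefore $H^{\nabla^{\rm R}}={\rm SU}(2m+1)$, completing the dichotomy.

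I expect the main obstacle to be the clean justification of the flat-versus-irreducible dichotomy, namely correctly invoking Gallot's theorem with the requisite completeness, together with the careful exclusion of the symmetric case via Ricci-flatness; once these are in place the remaining eliminations are short parity-and-dimension bookkeeping against Berger's list.
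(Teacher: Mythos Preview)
Your argument is correct and, at the level of content, coincides with the paper's: both reduce to Berger's list and use the parity $4m+2\equiv 2\pmod 4$ to kill the quaternionic and exceptional cases, together with the Calabi--Yau constraint $H\subset{\rm SU}(2m+1)$ to kill ${\rm SO}(4m+2)$ and ${\rm U}(2m+1)$.

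The packaging, however, differs in a way worth recording. The paper's proof is three lines because it simply cites Corollary~\ref{cor:posshol}, which was obtained by intersecting the Montgomery--Samelson list of transitive sphere actions with Berger's list; the dimension cut $n=4m+1$ then immediately leaves $\{\,{\rm SO}(4m+2),\ {\rm U}(2m+1),\ {\rm SU}(2m+1)\,\}$, and the Sasaki--Einstein hypothesis selects ${\rm SU}(2m+1)$ (or trivial). You instead work entirely on the cone, bypassing Montgomery--Samelson, and make explicit two ingredients the paper leaves implicit inside Corollary~\ref{cor:posshol}: Gallot's flat--versus--irreducible dichotomy for metric cones, and the exclusion of the locally symmetric case via Ricci-flatness. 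Your route is slightly more self-contained and clarifies exactly why ``or trivial'' arises (flat cone), whereas the paper's route is shorter but leans on the earlier corollary; the actual elimination steps are identical.
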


\begin{proof}
By dimension arguments, we see that the list in Corollary~\ref{cor:posshol} reduces to
\[
{\rm SO}(4m+2),\;{\rm U}(2m+1),\;{\rm SU}(2m+1).
\]
Since $M$ is Sasaki-Einstein, then $H^{\nabla^{\rm R}}$ must be a subgroup of ${\rm SU}(2m+1)$ in the above list, or trivial. The conclusion follows.
\end{proof}

The classification of Sasaki-Einstein manifolds of arbitrary dimension is still an open problem in differential geometry. A complete list of these in dimension five can be found in~\cite{BN}. 

\section{Symplectic rolling holonomy}

Following similar ideas to what was done in Section~\ref{sec:unitary}, we can prove with some modifications analogous results.

\begin{theorem}\label{th:3cont}
Let $(M,g)$ be a Riemannian manifold of dimension $4k+3$. Suppose that $H^{\nabla^{\rm R}}\subset {\rm Sp}(k+1)$. Then there exist three orthonormal Killing vector fields $Z_1,Z_2,Z_3$ and three $(1, 1)$ tensor fields $J_1,J_2,J_3$ satisfying
\begin{equation}\label{Jcond3Sasakian}
J_iX=\nabla_XZ_i,\quad (\nabla_XJ_i)Y=g(Z_i,Y)X-g(X,Z_i)Y, 
\end{equation}
for any vectors $X,Y$ tangent to $M$, and $[Z_i,Z_j]=2\epsilon_{i,j,k}Z_k$.
Moreover, for $i\in\{1,2,3\}$, the one forms $\alpha_i=g(Z_i,\cdot)$ are contact with Reeb vector fields $Z_i$, respectively. The distribution 
$D=\ker\alpha_1\cap\ker\alpha_2\cap\ker\alpha_3$ is the quaternionic contact distribution and $J_i|_D$ is a compatible almost complex structure on $D$. Thus $M$ is a 3-Sasakian manifold according to Definition~\ref{def:3sas}.
\end{theorem}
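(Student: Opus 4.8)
The plan is to mimic closely the proof of Theorem~\ref{th:cont}, replacing the single $\mathrm{U}(m+1)$-invariant complex structure by a quaternionic triple of $\mathrm{Sp}(k+1)$-invariant complex structures on the fibers of $TM\oplus{\mathbb R}\to M$, and then transferring the now-familiar computations to each structure separately while keeping track of the additional quaternionic relations. Concretely, since $H^{\nabla^{\rm R}}|_{x_0}\subset\mathrm{Sp}(k+1)=\mathrm{Sp}(T_{x_0}M\oplus{\mathbb R})$, there exist three complex structures $J_1^{\rm R},J_2^{\rm R},J_3^{\rm R}$ on $T_{x_0}M\oplus{\mathbb R}$ which are $h_1$-isometries, commute with every $B\in H^{\nabla^{\rm R}}|_{x_0}$, and satisfy the quaternion relations $(J_i^{\rm R})^2=-\mathrm{id}$ and $J_i^{\rm R}J_j^{\rm R}=\epsilon_{i,j,k}J_k^{\rm R}$ for distinct $i,j$. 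As in the unitary case, their commutation with the holonomy means that parallel transport of $J_{i,0}^{\rm R}$ along any curve from $x_0$ is path-independent, so the fields $J_i^{\rm R}:=(P^{\nabla^{\rm R}})_0^1(\gamma)J_{i,0}^{\rm R}$ are globally well-defined, $\nabla^{\rm R}$-parallel $(1,1)$-tensors on $TM\oplus{\mathbb R}$ satisfying the same algebraic identities pointwise.

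From here I would define, exactly as before and for each $i\in\{1,2,3\}$, the tensor $J_i$, the one-form $\beta_i$ and the vector field $Z_i$ via $(J_iX,\beta_i(X)):=J_i^{\rm R}(X,0)$ and $(Z_i,0):=J_i^{\rm R}(0,1)$, noting that $h((0,1),J_i^{\rm R}(0,1))=0$ forces the second component to vanish. The entire argument of Theorem~\ref{th:cont} then applies verbatim to each index $i$: unwinding $\nabla^{\rm R}J_i^{\rm R}=0$ through Definition~\ref{def:conn} yields $J_iY=\nabla_YZ_i$, the identity $(\nabla_YJ_i)(X)=-g(X,Y)Z_i+g(X,Z_i)Y$, the relation $\beta_i=-\alpha_i$ with $\alpha_i=g(Z_i,\cdot)$, and $\|Z_i\|=1$; skew-symmetry $g(X,J_iY)=-g(J_iX,Y)$ then shows each $Z_i$ is Killing, and the two-form $\omega_i(X,Y)=g(J_iX,Y)$ satisfies $\mathrm{d}\alpha_i=2\omega_i$ with $\omega_i|_{D_i}$ nondegenerate, so each $\alpha_i$ is contact with Reeb field $Z_i$ and $J_i|_{D_i}$ a compatible almost complex structure. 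This establishes~\eqref{Jcond3Sasakian} and shows that $(Z_i,\alpha_i,J_i,g)$ is a Sasakian structure for each $i$.

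What is genuinely new compared to the unitary case, and what I expect to be the main obstacle, is verifying the quaternionic bracket relations $[Z_i,Z_j]=2\epsilon_{i,j,k}Z_k$ required by Definition~\ref{def:3sas}. These cannot come from a single structure; they must be extracted from the quaternion identity $J_i^{\rm R}J_j^{\rm R}=\epsilon_{i,j,k}J_k^{\rm R}$. The key computation is to apply this identity to the vector $(0,1)$: on one hand $J_i^{\rm R}J_j^{\rm R}(0,1)=J_i^{\rm R}(Z_j,0)=(J_iZ_j,\beta_i(Z_j))$, and on the other hand it equals $\epsilon_{i,j,k}J_k^{\rm R}(0,1)=\epsilon_{i,j,k}(Z_k,0)$; comparing first components gives $J_iZ_j=\epsilon_{i,j,k}Z_k$, i.e. $\nabla_{Z_j}Z_i=\epsilon_{i,j,k}Z_k$ using $J_iV=\nabla_VZ_i$. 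Then $[Z_i,Z_j]=\nabla_{Z_i}Z_j-\nabla_{Z_j}Z_i=\epsilon_{j,i,k}Z_k-\epsilon_{i,j,k}Z_k=2\epsilon_{i,j,k}Z_k$, since $\epsilon$ is antisymmetric in its first two indices.

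Finally, to confirm $M$ is $3$-Sasakian I would check that the three Reeb fields are orthonormal, which follows from $g(Z_i,Z_j)=h(J_i^{\rm R}(0,1),J_j^{\rm R}(0,1))=h((0,1),(J_i^{\rm R})^{-1}J_j^{\rm R}(0,1))$ together with the quaternion relations forcing $(J_i^{\rm R})^{-1}J_j^{\rm R}(0,1)$ to be orthogonal to $(0,1)$ for $i\neq j$; and that the common kernel $D=\bigcap_i\ker\alpha_i$ carries the three compatible almost complex structures $J_i|_D$. Having assembled three Sasakian structures with orthonormal characteristic fields obeying the correct brackets, Definition~\ref{def:3sas} is satisfied and the proof concludes. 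The only bookkeeping subtlety is to ensure the chosen $J_i^{\rm R}$ are oriented so that the signs $\epsilon_{i,j,k}$ come out consistently across the bracket relation and the tensor relations; this is a matter of fixing the quaternionic frame at $x_0$ once and for all and propagating it by parallel transport.
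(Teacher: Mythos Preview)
Your proposal follows the paper's proof essentially verbatim: extend the $\mathrm{Sp}(k+1)$-invariant quaternionic triple on the fiber at $x_0$ by $\nabla^{\rm R}$-parallel transport, run the proof of Theorem~\ref{th:cont} for each index $i$, and then extract the bracket relation from the quaternion identity applied to $(0,1)$. The only issue is a sign slip in the bracket computation. With your convention $J_i^{\rm R}J_j^{\rm R}=\epsilon_{i,j,k}J_k^{\rm R}$ you obtain $\nabla_{Z_j}Z_i=J_iZ_j=\epsilon_{i,j,k}Z_k$, and hence
\[
[Z_i,Z_j]=\nabla_{Z_i}Z_j-\nabla_{Z_j}Z_i=\epsilon_{j,i,k}Z_k-\epsilon_{i,j,k}Z_k=-2\epsilon_{i,j,k}Z_k,
\]
since antisymmetry gives $\epsilon_{j,i,k}=-\epsilon_{i,j,k}$; your claim that this equals $+2\epsilon_{i,j,k}Z_k$ is an arithmetic error. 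The paper avoids this by adopting the convention $J_i^{\rm R}J_j^{\rm R}=-\epsilon_{i,j,k}J_k^{\rm R}$ (citing \cite[Proposition~1.2.4]{BG1}), which yields $J_iZ_j=-\epsilon_{i,j,k}Z_k$ and then $[Z_i,Z_j]=2\epsilon_{i,j,k}Z_k$ directly. As you anticipate in your final paragraph, this is purely a matter of orienting the quaternionic frame: either adopt the minus-sign convention from the outset, or replace the triple $(J_1^{\rm R},J_2^{\rm R},J_3^{\rm R})$ by $(J_1^{\rm R},J_3^{\rm R},J_2^{\rm R})$, which flips the sign of $\epsilon$ and restores agreement with Definition~\ref{def:3sas}.
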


\begin{proof}
The construction of the characteristic vector fields $Z_i$ and the one forms $\alpha_i$, $i\in\{1,2,3\}$, is analogous to the one in Theorem~\ref{th:cont}. For completeness, we briefly recall this. The main technical concern is to show that 
\begin{equation}\label{eq:Zsu2}
[Z_i,Z_j]=2\epsilon_{i,j,k}Z_k.
\end{equation}

Since $H^{\nabla^{\rm R}}$ is a subgroup of the symplectic group ${\rm Sp}(k+1)$, with respect to the metric $h=h_1$ defined in equation~\eqref{eq:hc}, there exist three $H^{\nabla^{\rm R}}$-invariant almost complex structures $J_1^{\rm R}, J_2^{\rm R}, J_3^{\rm R}$, which are well-defined $(1,1)$-tensor fields on $TM\oplus{\mathbb R}\to M$ satisfying
\begin{align*}
\|J_i^{\rm R}(X,r)\|_h&=\|(X,r)\|_h,&&\mbox{ for all $(X,r)$ sections of $TM\oplus{\mathbb R}$,}\\
(J_i^{\rm R})^2&=-{\rm id},&&\\
\nabla^{\rm R}J_i^{\rm R}&=0,&&\\
J_i^{\rm R}J_j^{\rm R}&=-\epsilon_{i,j,k}J^{\rm R}_k,&&\{i,j,k\}=\{1,2,3\}.
\end{align*}
The last condition arises from the action of the symplectic group, see~\cite[Proposition 1.2.4]{BG1}.

As before, we define three $(1,1)$-tensor fields on $M$, three one forms $\alpha_i\in\Omega^1(M)$ and three vector fields $Z_i\in{\rm VF}(M)$ as follows
\begin{align*}
(J_iX,-\alpha_i(X))&:=J_i^{\rm R}(X,0),\quad X\in{\rm VF}(M),\\
(Z_i,0)&:=J_i^{\rm R}(0,1).
\end{align*}
All properties of these objects are proved in the same way as in Theorem~\ref{th:cont}. Now we proceed to verify~\eqref{eq:Zsu2}. We can easily verify the following relation
\begin{align*}
(J_iZ_j,-\alpha_i(Z_j))&=J_i^{\rm R}(Z_j,0)=J_i^{\rm R}J_j^{\rm R}(0,1)\\
&=-\epsilon_{i,j,k}J^{\rm R}_k(0,1)=-\epsilon_{i,j,k}(Z_k,0).
\end{align*}
We conclude that $J_iZ_j=-\epsilon_{i,j,k}Z_k$. Since $J_iZ_j=\nabla_{Z_j}Z_i$ and the Levi-Civita connection is torsion free, we have
\[
[Z_i,Z_j]=\nabla_{Z_i}Z_j-\nabla_{Z_j}Z_i=2\epsilon_{i,j,k}Z_k.\qedhere
\]
%
\end{proof}

And the converse also holds.

\begin{theorem}
Let $(M,g)$ be a Riemannian manifold of dimension $4k + 3$. Suppose there exist three orthonormal Killing vector fields $Z_1,Z_2,Z_3\in{\rm VF}(M)$ such that $[Z_i,Z_j]=2\epsilon_{i,j,k}Z_k$ and three $(1,1)$-tensor fields $J_1,J_2,J_3$ satisfying the conditions in equation~\eqref{Jcond3Sasakian}, $J_iZ_i = 0$, and $J_i|_{D_i}\colon{D_i}\to{D_i}$ are isometric almost complex structures on $D_i=Z_i^\bot$. Then $H^{\nabla^{\rm R}} \subset {\rm Sp}(k+1)$.
\end{theorem}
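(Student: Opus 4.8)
The plan is to follow the pattern of Theorem~\ref{th:convcont}, upgrading the single complex structure used there to a quaternionic triple on the bundle $TM\oplus{\mathbb R}\to M$. For $i\in\{1,2,3\}$ I would define the $(1,1)$-tensor fields
\[
J_i^{\rm R}(X,r):=\big(J_iX+rZ_i,\,-g(X,Z_i)\big).
\]
That each $J_i^{\rm R}$ is an $h$-isometry, squares to $-{\rm id}$, and is $\nabla^{\rm R}$-parallel is proved verbatim as in Theorem~\ref{th:convcont}: those computations use only the single-index data in~\eqref{Jcond3Sasakian}, the relation $J_iZ_i=0$, and the skew-symmetry $g(X,J_iY)=-g(J_iX,Y)$ coming from $Z_i$ being Killing (cf.~\eqref{eq:skewsym}). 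By itself each $J_i^{\rm R}$ would already force $H^{\nabla^{\rm R}}$ into a unitary group, so the whole content of the symplectic statement lies in the compatibility between the three indices.

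The genuinely new ingredient is the quaternionic identity $J_i^{\rm R}J_j^{\rm R}=-\epsilon_{i,j,k}J_k^{\rm R}$ for $\{i,j,k\}=\{1,2,3\}$. Expanding the left-hand side on a section $(X,r)$ and comparing with $-\epsilon_{i,j,k}J_k^{\rm R}(X,r)$, the identity splits into three pieces: the coefficient of $r$ in the first slot demands the \emph{vertical} relation $J_iZ_j=-\epsilon_{i,j,k}Z_k$; the second slot reads $-g(J_jX,Z_i)=\epsilon_{i,j,k}g(X,Z_k)$, which is immediate from skew-symmetry once the vertical relation is known; and the remaining part of the first slot reduces, on the horizontal distribution $D=Z_1^\bot\cap Z_2^\bot\cap Z_3^\bot$, to the \emph{horizontal} relation $J_iJ_jX=-\epsilon_{i,j,k}J_kX$ (the vertical directions being covered algebraically by $J_iZ_j=-\epsilon_{i,j,k}Z_k$). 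I expect the careful bookkeeping of these mixed vertical/horizontal terms to be the main obstacle.

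For the vertical relation, $J_iX=\nabla_XZ_i$ and torsion-freeness turn the bracket hypothesis into $J_jZ_i-J_iZ_j=2\epsilon_{i,j,k}Z_k$; the Killing property of the $Z_i$ supplies the complementary antisymmetry $J_iZ_j=-J_jZ_i$ (obtained, for instance, from $\mathcal L_{Z_i}J_j=2\epsilon_{i,j,k}J_k$ evaluated at $Z_j$), and the two together yield $J_iZ_j=-\epsilon_{i,j,k}Z_k$. The horizontal relation then follows by differentiating this identity and restricting to $X\in D$: since $\nabla_XZ_k=J_kX$ we have $\nabla_X(J_iZ_j)=-\epsilon_{i,j,k}J_kX$, while the Leibniz rule together with~\eqref{Jcond3Sasakian} gives $\nabla_X(J_iZ_j)=(\nabla_XJ_i)(Z_j)+J_iJ_jX$ with $(\nabla_XJ_i)(Z_j)=g(Z_i,Z_j)X-g(X,Z_i)Z_j=0$ for $X\in D$; equating the two expressions produces exactly $J_iJ_jX=-\epsilon_{i,j,k}J_kX$. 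Since the $J_i^{\rm R}$ are $\nabla^{\rm R}$-parallel, it in fact suffices to verify the quaternionic identity at a single base point and transport it along loops, which streamlines this step.

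With the quaternionic triple in hand I would conclude as in Theorem~\ref{th:convcont}. Fixing $x\in M$ and identifying
\[
{\rm Sp}(k+1)\cong\{B\in{\rm SO}(T_xM\oplus{\mathbb R})\;:\;[J_i^{\rm R}|_x,B]=0,\ i=1,2,3\},
\]
the vanishing $\nabla^{\rm R}J_i^{\rm R}=0$ forces every holonomy transformation $B\in H^{\nabla^{\rm R}}|_x$ to commute with each $J_i^{\rm R}|_x$, so $H^{\nabla^{\rm R}}|_x\subset{\rm Sp}(k+1)$, as desired. I note that a quicker, if less self-contained, route is also available: the hypotheses are precisely the conditions of Proposition~\ref{equvSas}(a) for each index, so $(Z_i,\alpha_i,J_i,g)$ are three Sasakian structures with orthonormal Reeb fields satisfying the required brackets, i.e.\ $M$ is $3$-Sasakian (Definition~\ref{def:3sas}); then $C(M)$ is hyper-K\"ahler by Theorem~\ref{th:holcone}, and $H^{\nabla^{\rm R}}\cong H^{\tilde\nabla}\subset{\rm Sp}(k+1)$ by Theorem~\ref{th:hol}.
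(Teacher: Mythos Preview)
Your proposal is correct and follows the same architecture as the paper: define $J_i^{\rm R}(X,r)=(J_iX+rZ_i,-g(X,Z_i))$, establish $\nabla^{\rm R}$-parallelism (by the computation of Theorem~\ref{th:convcont}) and the quaternionic identities $J_i^{\rm R}J_j^{\rm R}=-\epsilon_{i,j,k}J_k^{\rm R}$, and conclude that every holonomy element commutes with all three $J_i^{\rm R}|_x$. The paper obtains the antisymmetry $J_iZ_j+J_jZ_i=0$ by testing against a frame $\{X\in D,\,Z_i,\,Z_j,\,Z_k\}$, whereas you get it from $\mathcal L_{Z_i}J_j=2\epsilon_{i,j,k}J_k$ evaluated at $Z_j$; both are valid and of comparable length. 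The one noteworthy difference is the horizontal relation $J_iJ_jX=-\epsilon_{i,j,k}J_kX$ on $D$: the paper simply invokes the $3$-Sasakian structure via \cite[Proposition~1.2.2]{BG}, while you derive it intrinsically by covariantly differentiating the vertical relation $J_iZ_j=-\epsilon_{i,j,k}Z_k$ along $X\in D$ and using~\eqref{Jcond3Sasakian} to kill $(\nabla_XJ_i)(Z_j)$. Your argument is thus more self-contained, the paper's shorter by outsourcing that step. The alternative route you sketch at the end, via Theorem~\ref{th:holcone} and Theorem~\ref{th:hol}, is a genuinely different (and quicker) proof that the paper does not use.
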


\begin{proof}
From~\cite[Proposition 1.2.2]{BG} and the hypothesis of this theorem, we see that $M$ has a 3-Sasakian structure with characteristic vector fields $Z_1,Z_2,Z_3\in{\rm VF}(M)$. We want to deduce that 
\begin{equation}\label{eq:Jcomm}
J_i^{\rm R}J_j^{\rm R}=-\epsilon_{i,j,k}J_k^{\rm R},\quad\{i,j,k\}=\{1,2,3\}.
\end{equation}
for the three $(1,1)$-tensor fields $J_i^{\rm R}$ on the vector bundle $TM\oplus{\mathbb R}\to M$ given by
\begin{equation*}
J_i^{\rm R}(X, r) := (J_iX + rZ_i, -g(X, Z_i)).
\end{equation*}
To do this, we first show that $J_iZ_j+J_jZ_i=0$. It is easy to compute that
\[
g(J_iZ_j+J_jZ_i,X)=g(J_iZ_j+J_jZ_i,Z_i)=g(J_iZ_j+J_jZ_i,Z_j)=0,
\]
for any $X$ a section of $D=D_1\cap D_2\cap D_3$. Thus, we only need to compute the following
\begin{align*}
g(J_iZ_j+J_jZ_i,Z_k)&=2\epsilon_{i,j,k}g(J_iZ_j+J_jZ_i,J_jZ_i-J_iZ_j)\\
&=2\epsilon_{i,j,k}\big(g(J_jZ_i,J_jZ_i)-g(J_iZ_j,J_iZ_j)\big)=0,
\end{align*}
because $J_i$ is an isometry on $D_i$. Since $[Z_i,Z_j]=J_jZ_i-J_iZ_j$, it follows that
\begin{equation}\label{eq:JandZ}
J_iZ_j=-\epsilon_{i,j,k}Z_k.
\end{equation}

To prove equation~\eqref{eq:Jcomm}, we divide the computation in three cases.
\begin{description}
\item[Cone vector field] From equation~\eqref{eq:JandZ} and the definition of $J_i^{\rm R}$, we have
\begin{align*}
J_i^{\rm R}J_j^{\rm R}(0,1)=&J_i^{\rm R}(Z_j,0)=(J_iZ_j,-\alpha_i(Z_j))\\
&=(-\epsilon_{i,j,k}Z_k,0)=-\epsilon_{i,j,k}J_k^{\rm R}(0,1).
\end{align*}

\item[$X$ section of $D$] Since $M$ is 3-Sasakian, we know that $J_iJ_j=-\epsilon_{i,j,k}J_k$ on $D$. Therefore
\begin{align*}
J_i^{\rm R}J_j^{\rm R}(X,0)=J_i^{\rm R}(J_jX,-\alpha_j(X))=(J_iJ_jX,0)=(-\epsilon_{i,j,k}J_kX,0)=-\epsilon_{i,j,k}J_k^{\rm R}(X,0).
\end{align*}

\item[$X$ in $M$ orthogonal to $D$] Let $X=a_iZ_i+a_jZ_j+a_kZ_k\in{\rm VF}(M)$ be orthogonal to $D$. From equation~\eqref{eq:JandZ}, we have
\begin{align*}
J_i^{\rm R}J_j^{\rm R}(X,0)&=\big(J_iJ_jX-\alpha_j(X)Z_i,-\alpha_i(J_jX)\big)=(a_iZ_j-a_jZ_i,\epsilon_{i,j,k}a_k)\\
-\epsilon_{i,j,k}J_k^{\rm R}(X,0)&=-\epsilon_{i,j,k}(-\epsilon_{i,j,k}a_iZ_j+\epsilon_{i,j,k}a_jZ_i,-a_k)=(a_iZ_j-a_jZ_i,\epsilon_{i,j,k}a_k).
\end{align*}
\end{description}

Recall that for any $x\in M$, the group ${\rm Sp}(k+1)$ can be identified with 
\[
{\rm Sp}(T_xM \oplus{\mathbb R}) := \{B \in {\rm SO}(T_xM\oplus{\mathbb R}) \,|\, [J_i^{\rm R}|_x,B] = 0,i\in\{1,2,3\}\},
\]
provided the endomorphisms $J_i^{\rm R}$ satisfy equation~\eqref{eq:Jcomm}. Thus $H^{\nabla^{\rm R}}|_x \subset {\rm Sp}(T_xM \oplus{\mathbb R})$.
\end{proof}

\end{document}